\newtheorem{theorem}{Theorem}[section]
\newtheorem{corollary}[theorem]{Corollary}
\theoremstyle{definition}
\newtheorem{definition}[theorem]{Definition}
\theoremstyle{remark}
\numberwithin{equation}{section}
\begin{document}

\title{Conformal image of an osculating curve on a smooth immersed surface}

\author{Absos Ali Shaikh$^1$, Mohamd Saleem Lone$^2$ and Pinaki Ranjan Ghosh$^{3}$}
\address{$^1$Department of Mathematics, University of
Burdwan, Golapbag, Burdwan-713104, West Bengal, India}
\email{aask2003@yahoo.co.in, aashaikh@math.buruniv.ac.in}
\address{$^2$International Centre for Theoretical Sciences, Tata Institute of Fundamental Research, 560089, Bengaluru, India}
\email{saleemraja2008@gmail.com, mohamdsaleem.lone@icts.res.in}
\address{$^3$Department of Mathematics, University of
Burdwan,Golapbag, Burdwan-713104, West Bengal, India}
\email{mailtopinaki94@gmail.com}

%\thanks{Support information for the second author.}

%    General info
\subjclass[2010]{53C22, 53A04, 53A05.}

%\date{January 1, 2001 and, in revised form, June 22, 2001.}

%\dedicatory{This paper is dedicated to our advisors.}

\keywords{Osculating curve, conformal map, homothetic map, normal curvature, geodesic curvature.}

\begin{abstract}
The main intention of the paper is to investigate an osculating curve under the conformal map. We obtain a sufficient condition for the conformal invariance of an osculating curve. We also find an equivalent system of a geodesic curve under the conformal transformation(motion) and show its invariance under isometry and homothetic motion. 
\end{abstract}

\maketitle
\section{Introduction} 
The study of smooth maps between surfaces is an important field of study in differential geometry. There are various mappings which preserve certain differential geometric quantities. Depending on the invariance of the mean curvature and the Gaussian curvature, we mainly classify the transformations as isometric, conformal and non-conformal. In case of isometry both length as well as the angle is preserved. A geometrical way of interpreting isometry is that it preserves the Gaussian curvature but not the mean curvature. One of the examples of such a transformation is the existence of an isometry between a helicoid and a catenoid. A  generalized class of isometry is called a conformal transformation which is a dilated form of isometry. In the case of conformal motion, only angles are preserved and not necessarily distances. An important example of conformal transformation is a stereographic projection.  Gerardus Mercator used the conformal property of stereographic projection to develop the famous Mercator's world map. It is believed  the world's first angle preserving map. A beautiful explanation together with some applications of conformal maps, Bobenko and Gunn recently published a movie with Springer on conformal maps, we strongly appeal the reader to see \cite{1}. A more generalized class of motions is of non-conformal transformations, wherein neither distances nor angles are preserved.

The geometric position of an arbitrary point on a curve or on a surface is affirmed by the position vector field. In case of curve, the position vector field can be thought of a trajectory of the curve, wherein its first derivative gives the velocity and the second derivative gives the acceleration of the trajectory. In this paper, we are going to study the conformal properties of a curve which totally depends upon its position vector field. If the position vector field of a curve always lies in the orthogonal complement of the binormal vector, we say that type of curve as an osculating curve. In other words an osculating curve is a curve whose position vector always lies in the osculating plane spanned by unit tangent vector and unit normal vector. Similarly we can define normal curve as a curve whose position vector lies in the plane spanned by the unit normal and binormal vector. Chen (\cite{2}) introduced rectifying curves and obtained a relation between a rectifying curve and the ratio of its curvature and torsion. Chen (\cite{2}) characterized the rectifying curve as the ratio of its curvature and torsion is a non-constant linear function of the parameter. For details we refer the reder to see \cite{3,4}.

The motivation of this paper starts with the article \cite{10}. In \cite{10} Shaikh and Ghosh investigated the invariant properties of a rectifying curve lying on a smooth immersed surface under isometric transformation. In that paper, they obtained a sufficient condition with respect to which a rectifying curve retains its rectifying property under an isometric motion. In addition to this sufficient condition, they proved that the normal component of the rectifying curve is preserved under the same isometric motion. This paper laid a foundation for many problems, for example, what happens to an osculating or a normal curve lying on smooth immersed surfaces under the  isometric motion? Both of these problems were discussed in \cite{8,9,12}. Later in \cite{11}, we generalize the notion of study by conformal transformation, wherein we study the conformal transformation of rectifying curves lying on smooth surfaces. In this paper(\cite{11}), in addition to various geometric invariants, we obtain a sufficient condition with respect to which a rectifying curve retains its nature under conformal motion. Now, the first hand possible studies depending upon the position vector field and the conformality will be the investigation of osculating and normal curves. The present paper is devoted to investigate osculating curves and the question with respect to the normal curve can be a future problem.

The paper is framed as follows. Section $2$ is devoted to some rudimentary facts  about the curves lying on a smooth immersed surface. In section $3$, we discuss the main results. We obtain a sufficient condition for the conformal invariance of an osculating curve. We also find a condition for the homothetic invariance of the normal component of the osculating curve and prove that the component of an osculating curve along any tangent vector to the surface is homothetic invariant. We also find an equivalent system of a geodesic curve under conformal transformation and show its invariance under isometry and homothetic motion. 
\section{Preliminaries} 
Suppose $\vec{t}$, $\vec{n}$ and $\vec{b}$ are respectively the tangent, normal and binormal vectors at any point to a unit speed curve $\sigma:I\subset \mathbb{R}\rightarrow \mathbb{E}^3$ such that $\{\vec{t},\ \vec{n},\ \vec{b}\}$ forms a Serret-Frenet frame. If $^\prime$ denotes the differentiation with respect to the arc length parameter $s$, then we have 
\begin{eqnarray*}
\left\{
\begin{array}{ll}
\frac{d\vec{t}}{ds} =\kappa \vec{n}\\
\frac{d\vec{n}}{ds} = -\kappa \vec{t} +\tau \vec{b}\\
\frac{d\vec{b}}{ds} =-\tau \vec{n},
\end{array}
\right.
\end{eqnarray*}
where $\kappa$ and $\tau$ are respectively the curvature and torsion of the curve $\sigma$.

\begin{definition}
A diffeomorphism $\mathcal{G}:\mathcal{M}\rightarrow\tilde{\mathcal{M}} $ is said to be local isometry between the surfaces $\mathcal M$ and $\tilde{\mathcal M}$, if for all $x_1,x_2 \in T_p({\mathcal M})$ and for any $p\in {\mathcal M}$, $$\langle x_1, x_2 \rangle_p =\langle d\mathcal{G}_p(x_1),d\mathcal{G}_p(x_2) \rangle_{\mathcal{G}(p)}.$$ A diffeomorphism which in addition to local isometry is a bijective map is called an isometry. If there exists such an isometry $\mathcal{G}: \mathcal M \rightarrow \tilde{\mathcal M},$ then $\mathcal M$ and $\tilde{\mathcal M}$ are said to be isometric. 

If $\mathcal{G}:{\mathcal M}\rightarrow \tilde{{\mathcal M}}$ is a local isometry, then the coefficients of their first fundamental forms are invariant under $\mathcal{G}$ and hence, 

\begin{equation*}
E=\tilde{E},\quad F=\tilde{F},\quad G=\tilde{G}.
\end{equation*}
%i.e., the first fundamental form coefficients are preserved under isometry.
\end{definition}
\begin{definition} 
Let $\mathcal{G}$ be a diffeomorphism between two smooth surfaces $\mathcal M$ and $\tilde{\mathcal M}$. Then $\mathcal{G}$ is said to be a local conformal map between $\mathcal M$ and $\tilde{\mathcal M}$, if for all $x_1,x_2 \in T_p({\mathcal M})$ with an arbitrary $p\in {\mathcal M}$, we have
$$\delta^2\langle d\mathcal{G}_p(x_1),d\mathcal{G}_p(x_2) \rangle_{\mathcal{G}(p)}=\langle x_1, x_2 \rangle_p, $$ where $\delta^2$ is a differentiable function on ${\mathcal M}$, also known as dilation factor.  We see that a conformal motion is the composition of an isometry and a dilation. We observe that if the dilation factor is identity, then we get the isometry. Geometrically, we can say that the conformal maps preserve angles both in direction and magnitude but not necessarily the lengths. In this case \cite{5}: 
\begin{equation*}
\delta^2E=\tilde{E},\quad  \delta^2F=\tilde{F},\quad  \delta^2G=\tilde{G}.
\end{equation*}
Here we shall call that the first fundamental form coefficients are conformally invariant.
\end{definition}
 For such a $\mathcal{G}$ to be conformal, a necessary and sufficient condition is that their line elements are proportional and the ratio of arc elements given by $\frac{ds}{d\tilde{s}}$ is equal to the dilation factor $\delta$. If for all points on the surface the dilation factor is a non-zero constant(say $c$), then the conformal map is said to be homothetic. If the dilation function is constantly equal to one, the conformal map becomes an isometry. Thus we can say isometric maps are a subset of conformal maps with the dilation factor $\delta=1$ \cite{6}.

\begin{definition}\label{def1}
Let $\mathcal M$ and  $\tilde{\mathcal M}$ be two smooth surfaces with $\Phi(u,v)$ being the surface patch of $\mathcal M$ and let $g:{\mathcal M}\rightarrow \tilde{\mathcal M}$ be a smooth map such that $\tilde{g}= g \circ \Phi$:
\begin{itemize}
\item If $g$ is conformal, then $g$ is said to be conformally invariant when  $\tilde{g}=\delta^2g$ for some dilation factor $\delta(u,v).$ 
\item If $g$ is homothetic, then $g$ is said to be homothetic invariant when  $\tilde{g}=c^2g, (c\neq \{0,1\})$. 
\end{itemize}
\end{definition}

\begin{definition}
Let $\sigma : I \subset \mathbb{R} \rightarrow \mathbb{E}^3$ be a smooth curve. Then
 $\sigma$ is said to be an osculating curve if its position vector is lying in the orthogonal complement of normal vector i.e., $\sigma \cdot \vec{n} =0,$ and hence
\begin{equation}\label{1}
\sigma(s)=\xi(s)\vec{t}(s)+ \mu(s)\vec{n}(s),
\end{equation}
where $\xi,$ $\mu$ are two smooth functions. 
\end{definition}

Let ${\mathcal{M}}$ be a smooth surface with 
$\Phi(u,v):U\subset\mathbb{R}^2\rightarrow \mathcal{M}$ being its chart map ([page no 52, \cite{5}]). Then, for the curve 
$\sigma(s)=\Phi(u(s),v(s))$ on $\mathcal{M}$, we have
\begin{eqnarray}
\nonumber\sigma^\prime(s)&=&\Phi_uu^\prime+\Phi_vv^\prime,\\
\nonumber\text{or, }&&\\
\label{2} \vec{t}(s)&=&\sigma^\prime(s)=\Phi_uu^\prime+\Phi_vv^\prime\\
\nonumber
 {\vec{t}'}(s)&=& u^{\prime\prime}\Phi_u+v^{\prime\prime}\Phi_v+{u^\prime}^2\Phi_{uu}+2u^\prime v^\prime \Phi_{uv}+{v^\prime}^2\Phi_{vv}.
\end{eqnarray}
If ${\bf N}$ is the surface normal, then
\begin{equation}\label{3}
\vec{n}(s)=\frac{1}{\kappa(s)}\vec{t}'(s)=\frac{1}{k(s)}(\Phi_uu''+\Phi_vv''+\Phi_{uu}u'^2+\Phi_{uv}2u'v'+\Phi_{vv}v'^2).\end{equation}
\begin{eqnarray}\nonumber
\vec{b}(s)&=&\frac{1}{k(s)}\Big[(\Phi_uu'+\Phi_vv')\times(\Phi_uu''+\Phi_vv''+\Phi_{uu}u'^2+\Phi_{uv}2u'v'+\Phi_{vv}v'^2)\Big],\\
\nonumber&=&\frac{1}{k(s)}\Big[\{u'v''-u''v'\}{\bf N}+\Phi_u\times \Phi_{uu}u'^3+2\Phi_u\times \Phi_{uv}u'^2v'+\Phi_u\times \Phi_{vv}u'v'^2\\
\nonumber &&+\Phi_v\times \Phi_{uu}u'^2v'+2\Phi_v\times \Phi_{uv}u'v'^2+\Phi_v\times \Phi_{vv}v'^3\Big].
\end{eqnarray}
\begin{definition}
If $\sigma$ be a unit speed curve on ${\mathcal{M}}$, then $\vec{t}\perp{\bf N}$
and hence ${\bf N} \times \sigma^\prime$, $\sigma^\prime$ and ${\bf N}$ are mutually orthogonal to each other such that $\sigma^{\prime\prime}=\kappa_g{\bf N}\times \sigma^\prime+\kappa_{n}{\bf N}$, where $\kappa_g$ and $\kappa_n$ are respectively known as the geodesic and normal curvature of $\sigma$.
Again since $\sigma^{\prime \prime}=\kappa(s)\vec{n}(s)$, we have
\begin{equation*}
\kappa_n=\kappa(s)\vec{n}(s)\cdot {\bf N}=(u''\Phi_u+v''\Phi_v+u'^2\Phi_{uu}+2u'v'\Phi_{uv}+v'^2\Phi_{vv})\cdot{\bf N}
\end{equation*}
or 
\begin{equation}\label{se1}
\kappa_n = {u^\prime}^2 L + 2u^\prime v^\prime M +{v^\prime}^2N,
\end{equation}
where $L,M,N$ are the coefficients of the second fundamental form of the surface. We say that $\sigma$ is asymptotic iff $\kappa_n=0.$
\end{definition}
\section{Conformal image of an osculating curve.}
Let $\sigma(s)$ be an osculating curve lying on a smooth immersed surface ${\mathcal{M}}$ in $\mathbb{E}^3$, then with the help of (\ref{1}), (\ref{2}) and (\ref{3}), we get
\begin{equation}\label{w2.1}
\sigma(s)=\xi(s)(\Phi_uu'+\Phi_vv')+\frac{\mu(s)}{k(s)}\Big[u''\Phi_u+v''\Phi_v+u'^2\Phi_{uu}+2u'v'\Phi_{uv}+v'^2\Phi_{vv}\Big].
\end{equation}
\begin{theorem}
If $\mathcal{G}:{\mathcal M}\rightarrow \tilde{{\mathcal M}}$ is a conformal map, then the image $\tilde{\sigma}(s)$ of $\sigma(s)$ under $\mathcal{G}$ is an osculating curve on $\tilde{{\mathcal M}}$ when
\begin{eqnarray}\label{j2.2}
\nonumber
\tilde{\sigma}-\delta \mathcal{G}_\ast(\sigma)&=&\frac{\mu}{\kappa}\Big[{u^\prime}^2 \left(\delta_u \mathcal{G}_\ast \Phi_u +  \delta\frac{\partial \mathcal{G}_*}{\partial u}\Phi_u\right)+2{u^\prime} v^\prime \left(\delta_u \mathcal{G}_\ast \Phi_v +  \delta\frac{\partial \mathcal{G}_*}{\partial u}\Phi_v\right)\\
&&+{v^\prime}^2 \left(\delta_v \mathcal{G}_\ast \Phi_v +  \delta\frac{\partial \mathcal{G}_*}{\partial v}\Phi_v\right)\Big].
\end{eqnarray}
\end{theorem}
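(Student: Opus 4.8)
The plan is to realize the image curve on the target surface through the patch $\tilde\Phi=\mathcal G\circ\Phi$, so that $\tilde\sigma(s)=\tilde\Phi(u(s),v(s))=\mathcal G(\sigma(s))$, and to test when $\tilde\sigma$ admits a decomposition $\tilde\sigma=\tilde\xi\,\tilde{\vec t}+\tilde\mu\,\tilde{\vec n}$ with $\tilde\xi,\tilde\mu$ smooth, i.e.\ exactly when $\tilde\sigma$ is osculating on $\tilde{\mathcal M}$. First I would record the identities that are forced on us. By the chain rule $\tilde\Phi_u=\mathcal G_\ast\Phi_u$ and $\tilde\Phi_v=\mathcal G_\ast\Phi_v$, while the second derivatives acquire the non-tensorial corrections $\tilde\Phi_{uu}=\tfrac{\partial\mathcal G_\ast}{\partial u}\Phi_u+\mathcal G_\ast\Phi_{uu}$, $\tilde\Phi_{uv}=\tfrac{\partial\mathcal G_\ast}{\partial u}\Phi_v+\mathcal G_\ast\Phi_{uv}$, $\tilde\Phi_{vv}=\tfrac{\partial\mathcal G_\ast}{\partial v}\Phi_v+\mathcal G_\ast\Phi_{vv}$. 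Since $\mathcal G$ is conformal the arc elements satisfy $\tfrac{ds}{d\tilde s}=\delta$, hence $\tfrac{du}{d\tilde s}=\delta u'$ and $\tfrac{d^2u}{d\tilde s^2}=\delta^2u''+\delta(\delta_uu'+\delta_vv')u'$, with the analogous formulas for $v$; differentiating once gives $\tilde{\vec t}=\delta\,\mathcal G_\ast(\vec t)$, and a further differentiation in $\tilde s$ expresses $\tilde\kappa\tilde{\vec n}$ through $\kappa,\vec n,\delta,\delta_u,\delta_v$ and $\mathcal G_\ast$, the precise form of which will only enter via the combination $\tilde\mu/\tilde\kappa$.

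Next I would expand $\tilde\sigma$ by the surface formula underlying (\ref{w2.1}), written for $\tilde\Phi$ on $\tilde{\mathcal M}$, namely $\tilde\sigma=\tilde\xi\big(\tilde\Phi_u\tfrac{du}{d\tilde s}+\tilde\Phi_v\tfrac{dv}{d\tilde s}\big)+\tfrac{\tilde\mu}{\tilde\kappa}\big[\tfrac{d^2u}{d\tilde s^2}\tilde\Phi_u+\tfrac{d^2v}{d\tilde s^2}\tilde\Phi_v+(\tfrac{du}{d\tilde s})^2\tilde\Phi_{uu}+2\tfrac{du}{d\tilde s}\tfrac{dv}{d\tilde s}\tilde\Phi_{uv}+(\tfrac{dv}{d\tilde s})^2\tilde\Phi_{vv}\big]$, and then substitute the identities from the first step. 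In parallel I would apply $\mathcal G_\ast$ to (\ref{w2.1}) and scale by $\delta$, obtaining $\delta\,\mathcal G_\ast(\sigma)=\delta\xi(\mathcal G_\ast\Phi_uu'+\mathcal G_\ast\Phi_vv')+\delta\tfrac{\mu}{\kappa}\big[u''\mathcal G_\ast\Phi_u+v''\mathcal G_\ast\Phi_v+u'^2\mathcal G_\ast\Phi_{uu}+2u'v'\mathcal G_\ast\Phi_{uv}+v'^2\mathcal G_\ast\Phi_{vv}\big]$. Forming the difference $\tilde\sigma-\delta\,\mathcal G_\ast(\sigma)$ and collecting coefficients of the vectors $\mathcal G_\ast\Phi_u,\mathcal G_\ast\Phi_v,\mathcal G_\ast\Phi_{uu},\mathcal G_\ast\Phi_{uv},\mathcal G_\ast\Phi_{vv}$ and of the $\tfrac{\partial\mathcal G_\ast}{\partial u},\tfrac{\partial\mathcal G_\ast}{\partial v}$ pieces, the requirement that $\tilde\sigma$ lie in $\operatorname{span}(\tilde{\vec t},\tilde{\vec n})$ fixes $\tilde\xi$ and $\tilde\mu/\tilde\kappa$ in terms of $\xi$, $\mu/\kappa$ and the $\delta$-data in such a way that every pure $\mathcal G_\ast\Phi_{\bullet}$ contribution cancels, and what remains is precisely the $\delta_u\mathcal G_\ast\Phi+\delta\tfrac{\partial\mathcal G_\ast}{\partial u}\Phi$-type combination on the right of (\ref{j2.2}). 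Conversely, assuming (\ref{j2.2}) and running the computation backwards produces explicit smooth functions $\tilde\xi,\tilde\mu$ with $\tilde\sigma=\tilde\xi\,\tilde{\vec t}+\tilde\mu\,\tilde{\vec n}$, which is the assertion.

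The hard part is not any single identity but the simultaneous bookkeeping: one must carry the reparametrization derivatives $\tfrac{du}{d\tilde s},\tfrac{d^2u}{d\tilde s^2}$ (which already contain $\delta,\delta_u,\delta_v$) together with the non-tensorial corrections $\tfrac{\partial\mathcal G_\ast}{\partial u}\Phi_u$, $\tfrac{\partial\mathcal G_\ast}{\partial u}\Phi_v$, $\tfrac{\partial\mathcal G_\ast}{\partial v}\Phi_v$ emerging from $\tilde\Phi_{uu},\tilde\Phi_{uv},\tilde\Phi_{vv}$, and at the same time insert the correct expression for $\tilde\kappa$; only when the factor $\tilde\mu/\tilde\kappa$ is handled consistently do the second-order surface terms $\mathcal G_\ast\Phi_{uu},\mathcal G_\ast\Phi_{uv},\mathcal G_\ast\Phi_{vv}$ annihilate their counterparts inside $\delta\,\mathcal G_\ast(\sigma)$ and leave exactly (\ref{j2.2}). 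To keep the tangential/normal splitting transparent I would first isolate the components of $\tilde\sigma$ along $\tilde{\vec t}=\delta\,\mathcal G_\ast(\vec t)$ and along $\tilde{\vec n}$, and only afterwards carry out the full six-term expansion and match the free functions $\tilde\xi,\tilde\mu$.
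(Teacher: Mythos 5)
Your overall strategy --- realize $\tilde\sigma$ through the patch $\tilde\Phi=\mathcal G\circ\Phi$, expand $\delta\,\mathcal G_\ast(\sigma)$ by applying $\mathcal G_\ast$ to (\ref{w2.1}), subtract, and identify the leftover with the stated correction --- is the same as the paper's. The genuine gap is in the identities you start from. In this paper conformality is encoded by $\tilde\Phi_u=\delta\,\mathcal G_\ast\Phi_u$ and $\tilde\Phi_v=\delta\,\mathcal G_\ast\Phi_v$ (equations (\ref{2.2})--(\ref{2.3}); here $\mathcal G_\ast$ plays the role of the isometric part of $d\mathcal G$, with the dilation written separately), and differentiating these is exactly what produces $\tilde\Phi_{uu}=\delta_u\mathcal G_\ast\Phi_u+\delta\frac{\partial\mathcal G_\ast}{\partial u}\Phi_u+\delta\mathcal G_\ast\Phi_{uu}$ and its companions in (\ref{q3.7}), i.e. precisely the blocks $\delta_u\mathcal G_\ast\Phi_u+\delta\frac{\partial\mathcal G_\ast}{\partial u}\Phi_u$ appearing on the right of (\ref{j2.2}). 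With these, the hypothesis rewrites $\tilde\sigma$ at once as $\xi\,(u'\tilde\Phi_u+v'\tilde\Phi_v)+\frac{\mu}{\kappa}\bigl(u''\tilde\Phi_u+v''\tilde\Phi_v+u'^2\tilde\Phi_{uu}+2u'v'\tilde\Phi_{uv}+v'^2\tilde\Phi_{vv}\bigr)$, a combination of $d\tilde\sigma/ds$ and $d^2\tilde\sigma/ds^2$, hence a vector in the osculating plane of $\tilde\sigma$, which is the theorem. You instead posit the plain chain rule $\tilde\Phi_u=\mathcal G_\ast\Phi_u$, $\tilde\Phi_{uu}=\frac{\partial\mathcal G_\ast}{\partial u}\Phi_u+\mathcal G_\ast\Phi_{uu}$, with no $\delta$, $\delta_u$, $\delta_v$ anywhere; under those identities the $\delta$-laden terms of (\ref{j2.2}) have no source, so the bookkeeping you describe cannot close to the stated condition. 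If you try to recover the missing factors through the reparametrization data $du/d\tilde s=\delta u'$, $d^2u/d\tilde s^2=\delta^2u''+\delta(\delta_uu'+\delta_vv')u'$, consistency of the second-order terms forces $\tilde\mu/\tilde\kappa=\mu/(\delta\kappa)$ (not the paper's normalization $\tilde\mu/\tilde\kappa=\mu/\kappa$), and the cross term then comes out as $u'v'\bigl(\delta_v\mathcal G_\ast\Phi_u+\delta_u\mathcal G_\ast\Phi_v+2\delta\frac{\partial\mathcal G_\ast}{\partial u}\Phi_v\bigr)$ rather than the $2u'v'\bigl(\delta_u\mathcal G_\ast\Phi_v+\delta\frac{\partial\mathcal G_\ast}{\partial u}\Phi_v\bigr)$ of (\ref{j2.2}); so the claimed cancellation ``leaving exactly (\ref{j2.2})'' does not happen as written. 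You must adopt the convention (\ref{2.2})--(\ref{2.3}) and its consequences (\ref{q3.7}) before collecting terms.

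A smaller point: the theorem asserts only sufficiency, and the paper proves that direction directly, substituting (\ref{j2.2}) and reading off the osculating form; your main computation runs the other way (assuming $\tilde\sigma$ osculating and deriving the condition), with the needed implication left to ``running the computation backwards.'' That reversal is harmless once the identities are right, and your extra apparatus with $\tilde s$ and $\tilde\kappa$ is not needed for the conclusion: it suffices to observe that the final expression lies in $\mathrm{span}(\tilde{\vec t},\tilde{\vec n})$, the precise coefficients being absorbed into the unspecified smooth functions $\tilde\xi$, $\tilde\mu$.
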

\begin{proof}
Let $\tilde{\mathcal{M}}$ be the conformal image of ${\mathcal{M}}$ and $\Phi(u,v)$ and $\tilde{\Phi}(u,v)=\mathcal{G}\circ \Phi(u,v) $ be the surface patches of ${\mathcal{M}}$ and $\tilde{{\mathcal{M}}},$ respectively. Then the differential map $d\mathcal{G}=\mathcal{G}_\ast$ of $\mathcal{G}$ sends each vector of the tangent space $T_p{\mathcal{M}}$ to a dilated tangent vector of the tangent space of $T_{\mathcal{G}(p)}\tilde{\mathcal{M}}$ with the dilation factor $\delta$. 
\begin{eqnarray}\label{2.2}
\tilde{\Phi}_u(u,v)&=&\delta(u,v) \mathcal{G}_*\Phi_u,\\
\label{2.3}\tilde{\Phi}_v(u,v)&=&\delta(u,v) \mathcal{G}_*\Phi_v.
\end{eqnarray}
Differentiating $(2.2)$ and $(2.3)$ partially with respect to both $u$ and $v$ respectively, we get
%\[(*)\qquad\qquad\qquad\qquad\qquad
%\left\{
\begin{eqnarray}\label{q3.7}
\nonumber \tilde{\Phi}_{uu}&=& \delta_u \mathcal{G}_\ast \Phi_u +  \delta\frac{\partial \mathcal{G}_*}{\partial u}\Phi_u+\delta \mathcal{G}_*\Phi_{uu}\\
\tilde{\Phi}_{vv}&=&\delta_v \mathcal{G}_\ast \Phi_v+ \delta\frac{\partial \mathcal{G}_*}{\partial v}\Phi_v+\delta \mathcal{G}_*\Phi_{vv}\\
\nonumber \tilde{\Phi}_{uv}&=& \delta_u \mathcal{G}_\ast \Phi_v+ \delta\frac{\partial \mathcal{G}_*}{\partial u}\Phi_v+\delta \mathcal{G}_*\Phi_{uv}\\
\nonumber &=& \delta_v \mathcal{G}_\ast \Phi_u+ \delta\frac{\partial \mathcal{G}_*}{\partial v}\Phi_u +\delta \mathcal{G}_*\Phi_{uv}.\\\nonumber
 \end{eqnarray}

 Therefore in view of (\ref{j2.2}), (\ref{2.2}), (\ref{2.3}) and (\ref{q3.7}), we obtain
\begin{eqnarray*}
\tilde{\sigma}&=&\xi(u^\prime \delta \mathcal{G}_\ast \Phi_u+v^\prime \delta \mathcal{G}_\ast \Phi_v)+\frac{\mu}{\kappa}\Big[{u^\prime}^2 \left(\delta_u \mathcal{G}_\ast \Phi_u +  \delta\frac{\partial \mathcal{G}_*}{\partial u}\Phi_u+\delta \mathcal{G}_\ast \Phi_{uu}\right)\\
&&+2{u^\prime} v^\prime \left(\delta_u \mathcal{G}_\ast \Phi_v +  \delta\frac{\partial \mathcal{G}_*}{\partial u}\Phi_v+\delta \mathcal{G}_\ast \Phi_{uv}\right)+{v^\prime}^2 \left(\delta_v \mathcal{G}_\ast \Phi_v +  \delta\frac{\partial \mathcal{G}_*}{\partial v}\Phi_v+\delta \mathcal{G}_\ast \Phi_{vv}\right)\Big],
\end{eqnarray*}
which can be written as
\begin{eqnarray*}
\nonumber
\tilde{\sigma}(s)&=&\xi(s)(\tilde{\Phi}_u u'+\tilde{\Phi}_v v')+\frac{\mu(s)}{k(s)}\Big[u''\tilde{\Phi}_u+v''\tilde{\Phi}_v+u'^2\tilde{\Phi}_{uu}+2u'v'\tilde{\Phi}_{uv}+v'^2\tilde{\Phi}_{vv}\Big],
\end{eqnarray*}
i.e.,
\begin{equation*}
\tilde{\sigma}(s)=\tilde{\xi}(s)\tilde{\vec{t}}(s)+\frac{\tilde{\mu}(s)}{\tilde{\kappa}(s)}\tilde{\vec{n}}(s)
\end{equation*}
for some $C^\infty$ functions $\tilde{\xi}(s)$ and $\tilde{\mu}(s).$ Thus $\tilde{\sigma}(s)$ is an osculating curve.
\end{proof}
Here and now on onward, we accept 
$\tilde{\xi}$, $\tilde{\mu}$ and $\tilde{\kappa}$ are also dilated with $\delta$ in such a way, such that 
$\tilde{\mu}/ \tilde{\kappa}=\mu / \kappa.$
\begin{corollary}
Let ${\mathcal{M}}$ and ${\tilde{\mathcal{M}}}$ be two smooth surfaces and $\mathcal{G}$ be a homothetic map between them and $\sigma(s)$ be an osculating curve on ${\mathcal{M}}$. Then $\tilde{\sigma}(s)$ is an osculating curve on $\tilde{\mathcal{M}}$ if
\begin{eqnarray*}
\nonumber
\tilde{\sigma}-c \mathcal{G}_\ast(\sigma)&=&\frac{\mu}{\kappa}\Big[{u^\prime}^2 \left(  c\frac{\partial \mathcal{G}_*}{\partial u}\Phi_u\right)+2{u^\prime} v^\prime \left(  c\frac{\partial \mathcal{G}_*}{\partial u}\Phi_v\right)
+{v^\prime}^2 \left(  c\frac{\partial \mathcal{G}_*}{\partial v}\Phi_v\right)\Big].
\end{eqnarray*}
\end{corollary}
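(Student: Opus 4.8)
The plan is to obtain the statement as the constant–dilation degeneration of the preceding Theorem. Recall from the discussion following the definition of a local conformal map, and from Definition~\ref{def1}, that a homothetic map is precisely a conformal map $\mathcal{G}:{\mathcal M}\rightarrow\tilde{\mathcal M}$ whose dilation function is a nonzero constant $c$, and that the homothetic case of interest has $c\neq 1$ so that we are genuinely outside the isometric regime. Hence the hypotheses of the Theorem are met with $\delta\equiv c$, and the only thing to do is to read off what its hypothesis (\ref{j2.2}) and its conclusion say in this special case.

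First I would note that, because $\delta$ is constant, $\delta_u=\delta_v=0$. Substituting $\delta\equiv c$ together with $\delta_u=\delta_v=0$ into (\ref{j2.2}): on the left, $\delta\,\mathcal{G}_\ast(\sigma)$ becomes $c\,\mathcal{G}_\ast(\sigma)$; on the right, every bracketed summand of the form $\delta_u\mathcal{G}_\ast\Phi_\bullet$ or $\delta_v\mathcal{G}_\ast\Phi_\bullet$ drops out, while each surviving summand $\delta\frac{\partial\mathcal{G}_*}{\partial u}\Phi_\bullet$ (resp. with $\partial v$) turns into $c\frac{\partial\mathcal{G}_*}{\partial u}\Phi_\bullet$. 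What remains is exactly the displayed identity in the Corollary. Therefore, under that identity, the Theorem applies verbatim and yields that $\tilde{\sigma}(s)$ is an osculating curve on $\tilde{\mathcal M}$.

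If one prefers a self-contained derivation rather than invoking the Theorem, I would rerun its proof with $\delta$ replaced by the constant $c$: relations (\ref{2.2}) and (\ref{2.3}) become $\tilde{\Phi}_u=c\,\mathcal{G}_*\Phi_u$ and $\tilde{\Phi}_v=c\,\mathcal{G}_*\Phi_v$, and the second–order formulas (\ref{q3.7}) collapse to $\tilde{\Phi}_{uu}=c\frac{\partial\mathcal{G}_*}{\partial u}\Phi_u+c\,\mathcal{G}_*\Phi_{uu}$, and similarly for $\tilde{\Phi}_{vv}$ and $\tilde{\Phi}_{uv}$, since the $\delta_u,\delta_v$ terms vanish. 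Feeding the hypothesised identity into the expression (\ref{w2.1}) written for $\tilde{\sigma}$ then telescopes, just as in the Theorem, to $\tilde{\sigma}(s)=\tilde{\xi}(s)\tilde{\vec{t}}(s)+\frac{\tilde{\mu}(s)}{\tilde{\kappa}(s)}\tilde{\vec{n}}(s)$, which is the osculating normal form.

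I do not expect a real obstacle here, since the Corollary is a routine specialization of the Theorem. The only points worth a line of care are: matching the bookkeeping of which $\delta_u$ and which $\delta_v$ terms disappear against the slightly asymmetric way the mixed partial $\tilde{\Phi}_{uv}$ was recorded in (\ref{q3.7}); and noting that $c\neq 0$ is what keeps $\mathcal{G}$ a diffeomorphism while $c\neq 1$ is what places us in the genuinely homothetic case, in accordance with Definition~\ref{def1}.
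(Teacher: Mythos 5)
Your proposal is correct and is exactly the argument the paper intends: the corollary is stated without proof precisely because it is the specialization of the preceding theorem to a constant dilation $\delta\equiv c$, for which $\delta_u=\delta_v=0$ kills the extra terms in (\ref{j2.2}) and leaves the displayed identity. Your bookkeeping of which terms survive, and the remark on $c\neq 0,1$, match the paper's conventions, so nothing further is needed.
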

\begin{corollary}\cite{10}
Let $\mathcal{G}:{\mathcal{M}}\rightarrow {\tilde{\mathcal{M}}}$ be an isometry between two smooth immersed surfaces ${\mathcal{M}}$ and $\tilde{\mathcal{M}}$ in $\mathbb{E}^3$ and $\sigma(s)$ be an osculating curve on ${\mathcal{M}}$. Then $\tilde{\sigma}(s)$ is an osculating curve on $\tilde{\mathcal{M}}$ if
\begin{eqnarray*}
\nonumber
\tilde{\sigma}- \mathcal{G}_\ast(\sigma)&=&\frac{\mu}{\kappa}\Big[{u^\prime}^2   \frac{\partial \mathcal{G}_*}{\partial u}\Phi_u+2{u^\prime} v^\prime \frac{\partial \mathcal{G}_*}{\partial u}\Phi_v
+{v^\prime}^2 \frac{\partial \mathcal{G}_*}{\partial v}\Phi_v \Big].
\end{eqnarray*}
\end{corollary}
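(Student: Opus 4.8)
The plan is to obtain this statement as an immediate specialization of the Theorem to the case of unit dilation. The first step is to recall, from Definition \ref{def1} and the remarks following it, that an isometry is exactly a conformal map whose dilation factor is the constant function $\delta \equiv 1$ on all of $\mathcal{M}$; consequently $\delta_u \equiv 0$ and $\delta_v \equiv 0$ everywhere, and the push-forward $\mathcal{G}_\ast$ is norm-preserving on each $T_p\mathcal{M}$.

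Next I would substitute $\delta = 1$ and $\delta_u = \delta_v = 0$ into the hypothesis (\ref{j2.2}) of the Theorem. Each term of the form $\delta_u \mathcal{G}_\ast \Phi_u$, $\delta_u \mathcal{G}_\ast \Phi_v$, $\delta_v \mathcal{G}_\ast \Phi_v$ vanishes, while each surviving coefficient $\delta\,\partial \mathcal{G}_\ast/\partial u$ collapses to $\partial \mathcal{G}_\ast/\partial u$, and similarly for the $v$-derivative. The left-hand side $\tilde{\sigma} - \delta\,\mathcal{G}_\ast(\sigma)$ becomes $\tilde{\sigma} - \mathcal{G}_\ast(\sigma)$, and what remains is precisely the displayed identity in the statement. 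Equivalently, and perhaps cleaner for exposition, one may first pass through the homothetic Corollary (dilation factor a nonzero constant $c$, so that again $\delta_u = \delta_v = 0$) and then set $c = 1$; the two corollaries then form a short chain: $\delta$ arbitrary $\rightarrow$ $\delta = c$ $\rightarrow$ $\delta = 1$.

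It then remains only to observe that the conclusion of the Theorem survives this specialization unchanged: with $\delta \equiv 1$, the reconstruction formulas (\ref{2.2}), (\ref{2.3}) and (\ref{q3.7}) still express $\tilde{\Phi}_u, \tilde{\Phi}_v, \tilde{\Phi}_{uu}, \tilde{\Phi}_{uv}, \tilde{\Phi}_{vv}$ in terms of $\mathcal{G}_\ast \Phi_u, \mathcal{G}_\ast \Phi_v$ and their parameter derivatives, so the same algebraic rearrangement used in the proof of the Theorem yields $\tilde{\sigma}(s) = \tilde{\xi}(s)\,\tilde{\vec{t}}(s) + (\tilde{\mu}(s)/\tilde{\kappa}(s))\,\tilde{\vec{n}}(s)$ for smooth functions $\tilde{\xi}, \tilde{\mu}$, i.e. $\tilde{\sigma}$ is an osculating curve on $\tilde{\mathcal{M}}$.

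The only point requiring any care — and it is bookkeeping rather than a genuine obstacle — is the handling of the mixed term $\tilde{\Phi}_{uv}$, which in (\ref{q3.7}) is written in two equivalent ways; once $\delta$ is constant both reduce to $\delta\,\mathcal{G}_\ast\Phi_{uv} = \mathcal{G}_\ast\Phi_{uv}$, so no inconsistency arises. One should also apply the normalization convention recorded immediately after the Theorem, namely $\tilde{\mu}/\tilde{\kappa} = \mu/\kappa$, which is what makes the coefficient $\mu/\kappa$ appear verbatim on the right-hand side of the asserted identity. Beyond this, no new idea is needed: the corollary is purely the $\delta = 1$ instance of the Theorem, recovering the isometric result of \cite{10}.
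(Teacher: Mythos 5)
Your proposal is correct and follows exactly the route the paper intends: the corollary is stated without a separate proof precisely because it is the $\delta\equiv 1$ specialization of the conformal theorem, under which $\delta_u=\delta_v=0$ kills the dilation-derivative terms in (\ref{j2.2}) and the theorem's proof applies verbatim. Your remarks on the mixed term $\tilde{\Phi}_{uv}$ and on the convention $\tilde{\mu}/\tilde{\kappa}=\mu/\kappa$ are consistent with the paper's setup, so no gap remains.
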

\begin{theorem}
If the smooth surfaces ${\mathcal{M}}$ and $\tilde{\mathcal{M}}$ are conformally related and $\sigma(s)$ is a non-asymptotic osculating curve on ${\mathcal{M}}$, then for the component of $\sigma$ along the surface normal, the following relation holds:
\begin{equation*}
\tilde{\sigma}\cdot{\tilde{\bf N}}-\delta^2(\sigma\cdot {\bf N})=\frac{\mu}{\kappa}\frac{1}{\delta^4 W^2}\Big[{u^\prime}^2(\tilde{\Phi}_{uu}-\delta^6 \Phi_{uu})+2u^\prime v^\prime (\tilde{\Phi}_{uv}-\delta^6 \Phi_{uv})+{v^\prime}^2(\tilde{\Phi}_{vv}-\delta^6 \Phi_{vv})\Big].
\end{equation*}
\end{theorem}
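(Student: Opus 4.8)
The plan is to reduce the statement to an explicit formula for the normal component $\sigma\cdot{\bf N}$ of an osculating curve, compute that component on ${\mathcal M}$, repeat the computation verbatim on $\tilde{\mathcal M}$, and subtract, inserting the conformal relations only at the end. I would start from the decomposition (\ref{1}). Since $\sigma$ is a unit-speed curve on ${\mathcal M}$ we have $\vec{t}\perp{\bf N}$, so dotting (\ref{1}) with the surface normal annihilates the tangential term: $\sigma\cdot{\bf N}=\mu\,(\vec{n}\cdot{\bf N})$. Because $\kappa\vec{n}=\sigma''=\vec{t}\,'$ and the normal curvature satisfies $\kappa_n=\kappa\,\vec{n}\cdot{\bf N}$, this gives $\sigma\cdot{\bf N}=\frac{\mu}{\kappa}\kappa_n$, and (\ref{se1}) turns it into $\sigma\cdot{\bf N}=\frac{\mu}{\kappa}\big({u'}^2L+2u'v'M+{v'}^2N\big)$. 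Writing ${\bf N}=\frac{1}{W}(\Phi_u\times\Phi_v)$ with $W=\abs{\Phi_u\times\Phi_v}=\sqrt{EG-F^2}$ and $L=\Phi_{uu}\cdot{\bf N}$, $M=\Phi_{uv}\cdot{\bf N}$, $N=\Phi_{vv}\cdot{\bf N}$, this becomes
\[
\sigma\cdot{\bf N}=\frac{\mu}{\kappa W}\Big[{u'}^2\,\Phi_{uu}\cdot(\Phi_u\times\Phi_v)+2u'v'\,\Phi_{uv}\cdot(\Phi_u\times\Phi_v)+{v'}^2\,\Phi_{vv}\cdot(\Phi_u\times\Phi_v)\Big],
\]
and the identical computation on $\tilde{\mathcal M}$ expresses $\tilde\sigma\cdot\tilde{\bf N}$ through $\tilde\Phi_u,\tilde\Phi_v,\tilde\Phi_{uu},\tilde\Phi_{uv},\tilde\Phi_{vv}$ and $\tilde W$.

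Next I would feed in the conformal data. From $\tilde E=\delta^2E$, $\tilde F=\delta^2F$, $\tilde G=\delta^2G$ one gets $\tilde W=\delta^2W$; the normalization $\tilde\mu/\tilde\kappa=\mu/\kappa$ adopted above disposes of the Frenet factors; and (\ref{2.2}), (\ref{2.3}), (\ref{q3.7}) express $\tilde\Phi_u,\tilde\Phi_v$ and their second derivatives in terms of the untilded quantities. Substituting these and expanding the triple products $\tilde\Phi_{uu}\cdot(\tilde\Phi_u\times\tilde\Phi_v)$ and its companions, the tensorial summand $\delta\,\mathcal{G}_*\Phi_{uu}$ of (\ref{q3.7}), paired with $\tilde{\bf N}$, recovers $L$ (and $\delta\,\mathcal{G}_*\Phi_{uv}$, $\delta\,\mathcal{G}_*\Phi_{vv}$ recover $M$, $N$), so after collecting the powers of $\delta$ and $W$ these terms reconstitute a multiple of $\sigma\cdot{\bf N}$, which combines with the left-hand side. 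What is left over is precisely the sum of the two non-tensorial corrections in (\ref{q3.7}) — those carrying $\delta_u$ (resp.\ $\delta_v$) and $\frac{\partial\mathcal{G}_*}{\partial u}$ (resp.\ $\frac{\partial\mathcal{G}_*}{\partial v}$). Regrouping these as $\tilde\Phi_{uu}-\delta^6\Phi_{uu}$, $\tilde\Phi_{uv}-\delta^6\Phi_{uv}$, $\tilde\Phi_{vv}-\delta^6\Phi_{vv}$ (these symbols read as the relevant triple products) and pulling out the factor $\frac{\mu}{\kappa}\cdot\frac{1}{\delta^4W^2}$ yields the asserted identity.

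The main obstacle is the treatment of the second-derivative vectors. Unlike $\Phi_u,\Phi_v$, the vectors $\Phi_{uu},\Phi_{uv},\Phi_{vv}$ are not tangent to ${\mathcal M}$, and in (\ref{q3.7}) each of them splits into a term $\delta\,\mathcal{G}_*\Phi_{uu}$ (resp.\ $\delta\,\mathcal{G}_*\Phi_{uv}$, $\delta\,\mathcal{G}_*\Phi_{vv}$) plus two corrections; when dotted with $\tilde{\bf N}$ one must know that (i) the summands along $\mathcal{G}_*\Phi_u,\mathcal{G}_*\Phi_v$ are tangent to $\tilde{\mathcal M}$ and hence drop out, and (ii) $\mathcal{G}_*\Phi_{uu}\cdot\tilde{\bf N}=\Phi_{uu}\cdot{\bf N}=L$, i.e.\ that $\mathcal{G}$ (or its ambient extension) carries ${\bf N}$ to $\tilde{\bf N}$ and preserves the relevant inner products — equivalently $\mathcal{G}_*\Phi_u\times\mathcal{G}_*\Phi_v=W\,\tilde{\bf N}=\mathcal{G}_*(\Phi_u\times\Phi_v)$ — which follows from $\mathcal{G}$ being an orientation-preserving conformal motion together with (\ref{2.2}) and (\ref{2.3}). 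Beyond this, the genuinely delicate point is clerical rather than conceptual: one has to keep straight $\tilde W=\delta^2W$, the extra factor of $\delta^{-1}$ produced when passing from $\tilde\Phi_{uu}\cdot(\tilde\Phi_u\times\tilde\Phi_v)$ back to $\tilde L\,\tilde W$, and the normalization $\tilde\mu/\tilde\kappa=\mu/\kappa$, so that the powers of $\delta$ and the factor $W^{-2}$ come out exactly as written. Finally, the non-asymptotic hypothesis $\kappa_n\neq0$ is what guarantees $\sigma\cdot{\bf N}\not\equiv0$, so that the relation is not the vacuous $0=0$; I would invoke it at the very end.
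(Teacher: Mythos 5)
Your opening reduction ($\sigma\cdot{\bf N}=\frac{\mu}{\kappa}\kappa_n=\frac{\mu}{\kappa}({u'}^2L+2u'v'M+{v'}^2N)$, and the same on $\tilde{\mathcal M}$ with $\tilde\mu/\tilde\kappa=\mu/\kappa$) is exactly the paper's starting point, namely its equations (\ref{k3.7}) and (\ref{k3.9}). From there, however, the arguments diverge and yours has a genuine gap. The stated right-hand side is tied to the paper's Monge-patch conventions, in which $L,M,N$ are written as $\Phi_{uu}/W^2$, $\Phi_{uv}/W^2$, $\Phi_{vv}/W^2$ with $W^2=1+\Phi_u^2+\Phi_v^2=EG-F^2$ (so $\Phi_{uu},\Phi_{uv},\Phi_{vv}$ enter as scalars), and the displayed powers come from $\tilde W^2=\tilde E\tilde G-\tilde F^2=\delta^4W^2$: then $\tilde L-\delta^2L=\frac{1}{\delta^4W^2}(\tilde\Phi_{uu}-\delta^6\Phi_{uu})$ and the theorem is a one-line subtraction of (\ref{k3.7}) from (\ref{k3.9}). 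Your triple-product reading, $L=\Phi_{uu}\cdot(\Phi_u\times\Phi_v)/W$ with $W=\sqrt{EG-F^2}$ and $\tilde W=\delta^2W$, instead yields
\begin{equation*}
\tilde{\sigma}\cdot\tilde{\bf N}-\delta^2(\sigma\cdot{\bf N})=\frac{\mu}{\kappa}\frac{1}{\delta^2W}\Big[{u'}^2\big(\tilde P_{uu}-\delta^4P_{uu}\big)+2u'v'\big(\tilde P_{uv}-\delta^4P_{uv}\big)+{v'}^2\big(\tilde P_{vv}-\delta^4P_{vv}\big)\Big],
\end{equation*}
where $P_{uu}=\Phi_{uu}\cdot(\Phi_u\times\Phi_v)$ etc.; the exponents are $\delta^{-2}W^{-1}$ and $\delta^{4}$, not $\delta^{-4}W^{-2}$ and $\delta^{6}$, and declaring that ``$\tilde\Phi_{uu}-\delta^6\Phi_{uu}$ reads as the relevant triple products'' does not reconcile the two. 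So even if your computation were completed it would not establish the identity in the form stated.

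The second problem is the step you yourself flag as the main obstacle and then assert: that $\mathcal{G}_*\Phi_{uu}\cdot\tilde{\bf N}=\Phi_{uu}\cdot{\bf N}=L$ and $\mathcal{G}_*\Phi_u\times\mathcal{G}_*\Phi_v=W\tilde{\bf N}$. Conformality of $\mathcal{G}$ constrains only the induced first fundamental forms; $\mathcal{G}_*$ is not even defined on the non-tangent vectors $\Phi_{uu},\Phi_{uv},\Phi_{vv}$ without choosing an ambient extension, and the second fundamental form is not a conformal invariant (stereographic projection between a sphere and a plane is conformal while $L,M,N$ vanish on one side and not the other), so the claimed recovery of $L,M,N$ after pushing forward cannot be justified from the hypotheses. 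The paper's proof never pushes second derivatives through $\mathcal{G}$ at all: it expresses $\tilde\sigma\cdot\tilde{\bf N}$ through the coefficients $\tilde L,\tilde M,\tilde N$ of $\tilde{\mathcal M}$ in Monge form and subtracts. To repair your argument, drop the pushforward step, keep each side written in its own second fundamental form coefficients, and adopt (or explicitly derive) the paper's normalizations $\tilde\mu/\tilde\kappa=\mu/\kappa$ and $\tilde W^2=\delta^4W^2$ together with its Monge-patch reading of $\Phi_{uu},\Phi_{uv},\Phi_{vv}$; only then do the stated powers of $\delta$ and $W$ appear.
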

\begin{proof}
Let $\sigma(s)$ be an osculating curve on $\mathcal{M}$ whose at least second order partial derivatives are non-vanishing. To find the position vector of $\sigma$ along the normal ${\bf N}$ to the surface ${\mathcal{M}}$ at a point $\sigma(s)$, from (\ref{w2.1}), we have
\begin{equation}\label{k3.7}
\sigma(s)\cdot {\bf N} =\frac{\mu(s)}{k(s)}\Big[u'^2L+2u'v'M+v'^2N\Big],
\end{equation}
where $L,$ $M$ and $N$ are the second fundamental form coefficients. In the Monge patch form, the coefficients are given by
\begin{equation}\label{q20}
L=\frac{\Phi_{uu}}{1+\Phi_u^2+\Phi_v^2(=W^2)},\quad M=\frac{\Phi_{uv}}{1+\Phi_u^2+\Phi_v^2},\quad N=\frac{\Phi_{vv}}{1+\Phi_u^2+\Phi_v^2}. 
\end{equation}
Now if $\tilde{\sigma}(a)$ is an osculating curve on $\tilde{M}$, we have
\begin{equation}\label{k3.9}
\tilde{\sigma}\cdot \tilde{\bf N}= \frac{\mu(s)}{k(s)}\Big[u'^2\tilde{L}+2u'v'\tilde{M}+v'^2\tilde{N}\Big].
\end{equation}
Again by the Monge patch form, the conformal coefficients are given by
\begin{equation}\label{q20}
\tilde{L}=\frac{\tilde{\Phi}_{uu}}{1+\tilde{\Phi}_u^2+\tilde{\Phi}_v^2},\quad \tilde{M}=\frac{\tilde{\Phi}_{uv}}{1+\tilde{\Phi}_u^2+\tilde{\Phi}_v^2},\quad \tilde{N}=\frac{\tilde{\Phi}_{vv}}{1+\tilde{\Phi}_u^2+\tilde{\Phi}_v^2}. 
\end{equation}
From (\ref{k3.7}) and (\ref{k3.9}), we get
\begin{equation*}
\tilde{\sigma}\cdot{\tilde{\bf N}}-\delta^2 (\sigma \cdot {\bf N})=\frac{\mu}{\kappa}\Big[{u^\prime}^2(\tilde{L}-\delta^2 L)+2u^\prime v^\prime (\tilde{M}-\delta^2 M)+{v^\prime}^2(\tilde{N}-\delta^2 N)\Big].
\end{equation*}
Using (\ref{q20}) the above equation turns out to be
\begin{equation}\label{q21}
\tilde{\sigma}\cdot{\tilde{\bf N}}-\delta^2(\sigma\cdot {\bf N})=\frac{\mu}{\kappa}\frac{1}{\delta^4 W^2}\Big[{u^\prime}^2(\tilde{\Phi}_{uu}-\delta^6 \Phi_{uu})+2u^\prime v^\prime (\tilde{\Phi}_{uv}-\delta^6 \Phi_{uv})+{v^\prime}^2(\tilde{\Phi}_{vv}-\delta^6 \Phi_{vv})\Big],
\end{equation}
which proves the theorem.
\end{proof}
\begin{corollary}\label{cor3.5}
If ${\mathcal{M}}$ and $\tilde{\mathcal{M}}$ are conformally related such that $\sigma(s)$ is osculating curve on ${\mathcal{M}}$, then the normal component of $\sigma(s)$ is invariant under the conformal map if any one of the following relation holds:
\begin{itemize}
\item[(i)] The position vector of $\sigma(s)$ is in the direction of the tangent vector.
\item[(ii)] The curve $\sigma(s)$ is asymptotic.
\item[(iii)] The normal curvature is invariant under the confomal map.
\end{itemize}
\end{corollary}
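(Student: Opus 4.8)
The plan is to read the corollary off the identity already proved in the previous theorem, equation (\ref{q21}), together with the formulas (\ref{k3.7}) and (\ref{k3.9}) for the normal components of $\sigma$ and $\tilde\sigma$. Write $\Delta := \tilde\sigma\cdot\tilde{\bf N}-\delta^{2}(\sigma\cdot{\bf N})$. By (\ref{k3.7}) and (\ref{se1}) one has $\sigma\cdot{\bf N}=\frac{\mu}{\kappa}\,\kappa_{n}$ with $\kappa_{n}={u'}^{2}L+2u'v'M+{v'}^{2}N$, and by (\ref{k3.9}) the analogous identity $\tilde\sigma\cdot\tilde{\bf N}=\frac{\mu}{\kappa}\,\tilde\kappa_{n}$ with $\tilde\kappa_{n}={u'}^{2}\tilde L+2u'v'\tilde M+{v'}^{2}\tilde N$ (here one uses the standing convention $\tilde\mu/\tilde\kappa=\mu/\kappa$ adopted earlier in this section). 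Subtracting gives $\Delta=\frac{\mu}{\kappa}\bigl(\tilde\kappa_{n}-\delta^{2}\kappa_{n}\bigr)$, which is exactly the identity obtained in the proof of the previous theorem just before the Monge-patch substitution (\ref{q20}). By Definition \ref{def1}, ``the normal component of $\sigma$ is invariant under the conformal map'' means precisely $\tilde\sigma\cdot\tilde{\bf N}=\delta^{2}(\sigma\cdot{\bf N})$, i.e. $\Delta=0$; so the whole corollary reduces to checking that $\Delta$ vanishes under each of the three hypotheses.

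I would then dispatch the three cases in turn. Case (i): if the position vector of $\sigma$ lies along $\vec t$, then in the representation (\ref{1}) the coefficient $\mu$ is identically zero; since $\mu/\kappa$ is the common scalar factor in $\Delta$ --- indeed both $\sigma\cdot{\bf N}$ and $\tilde\sigma\cdot\tilde{\bf N}$ are then identically zero by (\ref{k3.7}) and (\ref{k3.9}) --- we get $\Delta=0$. Case (ii): if $\sigma$ is asymptotic then $\kappa_{n}=0$ by definition, so (\ref{k3.7}) gives $\sigma\cdot{\bf N}=0$; the same computation on $\tilde{\mathcal M}$, the image curve being asymptotic as well, gives $\tilde\sigma\cdot\tilde{\bf N}=0$ through (\ref{k3.9}), whence $\Delta=0$. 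Case (iii): interpreting ``the normal curvature is invariant under the conformal map'' in the sense of Definition \ref{def1}, i.e. $\tilde\kappa_{n}=\delta^{2}\kappa_{n}$, the factor $\tilde\kappa_{n}-\delta^{2}\kappa_{n}$ in $\Delta$ vanishes identically, so again $\Delta=0$. Each of these is a one-line substitution once the reduction of the first paragraph is in place.

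The step I expect to require the most care --- and essentially the only genuine obstacle --- is the logical status of the three conditions rather than any computation. Case (iii), once ``invariant'' is unpacked via Definition \ref{def1}, is little more than a paraphrase of (\ref{q21}), so the geometric content of the corollary sits in (i) and (ii). For (ii) one must be explicit that asymptoticity is being assumed of the image curve $\tilde\sigma$ on $\tilde{\mathcal M}$ as well as of $\sigma$ on $\mathcal M$: the vanishing of the normal curvature is not a conformal invariant in general, since the second fundamental form is not rescaled by $\delta^{2}$ the way the first fundamental form is, so this must either be built into the hypothesis or verified directly from the explicit expression for the image binormal recorded in Section~2. Everything else is routine.
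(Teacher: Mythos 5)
Your proposal follows essentially the same route as the paper: both reduce everything to the identity $\tilde{\sigma}\cdot\tilde{\bf N}-\delta^{2}(\sigma\cdot{\bf N})=\frac{\mu}{\kappa}\bigl(\tilde{\kappa}_n-\delta^{2}\kappa_n\bigr)$ obtained from (\ref{se1}), (\ref{k3.7}) and (\ref{k3.9}), and then dispatch (i) via $\mu=0$ and (ii), (iii) via the vanishing of $\tilde{\kappa}_n-\delta^{2}\kappa_n$. Your explicit caveat in case (ii) --- that asymptoticity of $\sigma$ alone does not force $\tilde{\kappa}_n=0$, so the image curve's asymptoticity must be assumed or checked --- is in fact a more careful reading than the paper's one-line dismissal, but it is a refinement of the same argument rather than a different one.
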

\begin{proof}
From (\ref{se1}), (\ref{k3.7}) and (\ref{k3.9}), we get
\begin{equation}
\tilde{\sigma}\cdot \tilde{\bf N}-\delta^2(\sigma \cdot {\bf N})= \frac{\mu(\tilde{\kappa}_n -\delta^2 \kappa_n)}{\kappa}.
\end{equation}
Now $\tilde{\sigma}\cdot {\bf N}-\delta^2(\sigma \cdot {\bf N})=0$ if and only if $\mu =0$ and $\tilde{\kappa}_n -\delta^2 \kappa_n\neq 0,$ implying $\sigma(s)=\xi(s)t(s)$ which proves $(i)$ of the corollary. $(ii)$ and $(iii)$ are the direct implications of $\tilde{\kappa}_n -\delta^2 \kappa_n=0$.  
\end{proof}
\begin{corollary}
If ${\mathcal{M}}$ and $\tilde{\mathcal{M}}$ are homothetic and $\sigma(s)$ is an osculating curve on ${\mathcal{M}}$, then the normal component of $\sigma(s)$ is homothetically invariant if any one of the following relation holds:

\begin{itemize}
\item[(i)] $(i)$ and $(ii)$ of corollary $\ref{cor3.5}$ holds.
\item[(ii)] The normal curvature is homothetic invariant.
\end{itemize}
\end{corollary}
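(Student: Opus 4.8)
The plan is to derive the homothetic statement as a direct specialization of Theorem 3.3, exactly paralleling how Corollaries 3.4 and 3.5 were obtained from Theorems 3.1 and 3.3. First I would recall that a homothetic map is precisely a conformal map whose dilation factor $\delta$ is a nonzero constant $c \neq 0, 1$; consequently $\delta_u = \delta_v = 0$, and the proportionality of the first fundamental forms reads $\tilde E = c^2 E$, $\tilde F = c^2 F$, $\tilde G = c^2 G$. The starting point is the identity established in the proof of Corollary \ref{cor3.5}, namely
\begin{equation*}
\tilde{\sigma}\cdot \tilde{\bf N}-\delta^2(\sigma \cdot {\bf N})= \frac{\mu(\tilde{\kappa}_n -\delta^2 \kappa_n)}{\kappa},
\end{equation*}
which holds for any non-asymptotic osculating curve under a conformal map. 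Substituting $\delta = c$ gives
\begin{equation*}
\tilde{\sigma}\cdot \tilde{\bf N}-c^2(\sigma \cdot {\bf N})= \frac{\mu(\tilde{\kappa}_n -c^2 \kappa_n)}{\kappa}.
\end{equation*}

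Next I would unwind what ``homothetically invariant'' means for the normal component in the sense of Definition \ref{def1}: the scalar function $g = \sigma \cdot {\bf N}$ transforms as $\tilde g = c^2 g$, i.e. $\tilde{\sigma}\cdot \tilde{\bf N} = c^2 (\sigma \cdot {\bf N})$. Thus the left-hand side of the displayed identity vanishes, and since $\kappa \neq 0$ the invariance is equivalent to $\mu(\tilde{\kappa}_n - c^2\kappa_n) = 0$. From here the two sufficient conditions follow immediately. In case $(i)$, if condition $(i)$ of Corollary \ref{cor3.5} holds then $\mu = 0$ (the position vector is purely tangential), so the product vanishes; if condition $(ii)$ of Corollary \ref{cor3.5} holds then $\sigma$ is asymptotic, so $\kappa_n = 0$, and one checks that $\tilde\kappa_n = 0$ as well — indeed from \eqref{se1} applied on $\tilde{\mathcal M}$ together with the homothetic transformation of the second fundamental form coefficients $\tilde L = c^2 L$, $\tilde M = c^2 M$, $\tilde N = c^2 N$ (which is itself the specialization of \eqref{q20} with constant $\delta = c$, using $\tilde\Phi_{uu} = c\,\mathcal G_*\Phi_{uu}$ since the $\delta_u, \delta_v$ terms drop), one gets $\tilde\kappa_n = c^2\kappa_n$, so the factor $\tilde\kappa_n - c^2\kappa_n = 0$ regardless. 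In case $(ii)$, if the normal curvature is homothetic invariant, $\tilde\kappa_n = c^2\kappa_n$ by definition, so again $\tilde\kappa_n - c^2\kappa_n = 0$ and the product vanishes.

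I would then assemble these observations into the proof: state the homothetic identity, observe that homothetic invariance of $\sigma \cdot {\bf N}$ is equivalent to $\mu(\tilde\kappa_n - c^2\kappa_n) = 0$ since $\kappa$ never vanishes, and verify that each listed hypothesis forces one of the two factors to vanish. The main obstacle — really the only point requiring care rather than routine substitution — is the asymptotic subcase of $(i)$: one must confirm that asymptoticity of $\sigma$ on $\mathcal M$ propagates to asymptoticity of $\tilde\sigma$ on $\tilde{\mathcal M}$ under the homothety, which is where the constancy of $c$ is genuinely used (so that the extra $\delta_u \mathcal G_* \Phi_u$-type terms in \eqref{q3.7} disappear and $\tilde\Phi_{uu}$, $\tilde\Phi_{uv}$, $\tilde\Phi_{vv}$ scale cleanly by $c$, giving $\tilde\kappa_n = c^2\kappa_n$). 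Everything else is a transcription of the conformal corollary with $\delta$ frozen to the constant $c$.
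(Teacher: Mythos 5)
Your overall reduction is the same one the paper intends: specialize the conformal identity $\tilde{\sigma}\cdot\tilde{\bf N}-\delta^{2}(\sigma\cdot{\bf N})=\mu(\tilde{\kappa}_n-\delta^{2}\kappa_n)/\kappa$ from the proof of Corollary \ref{cor3.5} to a constant dilation $\delta=c$, observe that homothetic invariance of the normal component is equivalent to $\mu(\tilde{\kappa}_n-c^{2}\kappa_n)=0$, and check that each listed hypothesis kills one of the two factors. That skeleton is fine and matches the paper.

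However, your treatment of the asymptotic subcase contains a genuine error. You claim that for a homothety $\tilde{\Phi}_{uu}=c\,\mathcal{G}_{*}\Phi_{uu}$ ``since the $\delta_u,\delta_v$ terms drop,'' and conclude $\tilde{L}=c^{2}L$, $\tilde{M}=c^{2}M$, $\tilde{N}=c^{2}N$, hence $\tilde{\kappa}_n=c^{2}\kappa_n$ automatically. Setting $\delta=c$ in \eqref{q3.7} only removes the terms containing $\delta_u$ and $\delta_v$; the terms $c\,\frac{\partial\mathcal{G}_{*}}{\partial u}\Phi_u$, $c\,\frac{\partial\mathcal{G}_{*}}{\partial u}\Phi_v$, $c\,\frac{\partial\mathcal{G}_{*}}{\partial v}\Phi_v$ survive (this is exactly why the paper's homothetic Corollary after Theorem 3.1 still carries those terms), so $\tilde{\Phi}_{uu}\neq c\,\mathcal{G}_{*}\Phi_{uu}$ in general and the scaling of the second fundamental form does not follow. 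Indeed it cannot: the second fundamental form is not determined by the first, so a homothety (even an isometry) need not preserve normal curvature --- the local isometry of a plane onto a cylinder sends an asymptotic straight line to a curve of nonzero normal curvature. Moreover, if $\tilde{\kappa}_n=c^{2}\kappa_n$ held for every homothety, the corollary would be vacuous: the normal component would always be homothetically invariant and none of the listed hypotheses would be needed, which is a sign the claim is too strong. The correct handling is simply the one in the proof of Corollary \ref{cor3.5}: under hypothesis (ii) the vanishing of $\tilde{\kappa}_n-c^{2}\kappa_n$ is assumed, and in the asymptotic subcase of (i) one argues exactly as there (treating the image curve's normal curvature as the paper does), without asserting that the homothety alone forces $\tilde{\kappa}_n=c^{2}\kappa_n$.
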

\begin{corollary}
If ${\mathcal{M}}$ and $\tilde{\mathcal{M}}$ are isometric and $\sigma(s)$ is an osculating curve on ${\mathcal{M}}$, then the component of $\sigma$ along the surface normal is invariant under isometry if any one of the following relation holds:
\begin{itemize}
\item[(i)] $(i)$ and $(ii)$ of corollary $\ref{cor3.5}$ holds.
\item[(ii)] The normal curvature is invariant under isometry.
\end{itemize}
\end{corollary}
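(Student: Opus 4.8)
The plan is to obtain this corollary by specializing to the isometric case the identity already established for conformal maps, exactly in the spirit of Corollary~\ref{cor3.5} and the preceding homothetic corollary. The point of departure will be the identity displayed in the proof of Corollary~\ref{cor3.5},
\[
\tilde{\sigma}\cdot \tilde{\bf N}-\delta^2(\sigma \cdot {\bf N})= \frac{\mu\,(\tilde{\kappa}_n -\delta^2 \kappa_n)}{\kappa},
\]
which holds for any osculating curve $\sigma$ on ${\mathcal{M}}$ whose image $\tilde{\sigma}$ is osculating on $\tilde{{\mathcal{M}}}$. Since an isometry is precisely a conformal map whose dilation factor is constantly $1$, I would set $\delta=1$ and reduce this to
\[
\tilde{\sigma}\cdot \tilde{\bf N}-(\sigma \cdot {\bf N})= \frac{\mu\,(\tilde{\kappa}_n - \kappa_n)}{\kappa}.
\]

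By the notion of invariance in Definition~\ref{def1} (with dilation factor $1$), the component $\sigma\cdot{\bf N}$ of $\sigma$ along the surface normal is invariant under $\mathcal{G}$ exactly when the right-hand side vanishes, i.e. when $\mu\,(\tilde{\kappa}_n - \kappa_n)=0$. I would then split into the two listed alternatives. If $\mu\equiv 0$, then \eqref{1} gives $\sigma(s)=\xi(s)\vec{t}(s)$, so the position vector of $\sigma$ lies along the tangent vector, which is case~(i) of Corollary~\ref{cor3.5}; and if $\sigma$ is asymptotic, then $\kappa_n=0$ by \eqref{se1} and the factor $\tilde{\kappa}_n-\kappa_n$ collapses to $\tilde{\kappa}_n$, which is case~(ii) of Corollary~\ref{cor3.5} --- so alternative~(i) of the present corollary forces the right-hand side to be zero. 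On the other hand, alternative~(ii) asserts $\tilde{\kappa}_n=\kappa_n$, and again the right-hand side vanishes. In either case $\tilde{\sigma}\cdot\tilde{\bf N}=\sigma\cdot{\bf N}$, which is the assertion.

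The step I expect to need the most care is the asymptotic alternative: the second fundamental form, and hence the normal curvature $\kappa_n$, is not in general preserved by an isometry, so the phrase ``the curve is asymptotic'' has to be read as a condition on $\sigma$ on both ${\mathcal{M}}$ and $\tilde{\mathcal{M}}$ (equivalently, as $\tilde{\kappa}_n=\kappa_n=0$) if it is genuinely to annihilate $\tilde{\kappa}_n-\kappa_n$. With that reading in place the corollary follows immediately from the identity above, with no computation beyond the substitution $\delta=1$; for this reason I would organize the argument around that single identity rather than re-running in full the Monge-patch manipulation underlying equation \eqref{q21}.
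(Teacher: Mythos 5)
Your proposal is correct and follows essentially the route the paper intends: this corollary is simply the specialization of the conformal identity $\tilde{\sigma}\cdot\tilde{\bf N}-\delta^2(\sigma\cdot{\bf N})=\mu(\tilde{\kappa}_n-\delta^2\kappa_n)/\kappa$ from the proof of Corollary~\ref{cor3.5} to the isometric case $\delta=1$, with each listed alternative forcing the right-hand side to vanish. Your remark that the asymptotic case must be read as $\tilde{\kappa}_n=\kappa_n=0$ (since the second fundamental form is not preserved by isometry) is a sensible precision consistent with how the paper uses $\tilde{\kappa}_n-\delta^2\kappa_n=0$ in Corollary~\ref{cor3.5}.
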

\begin{theorem}
If $\mathcal{G}:{\mathcal M}\rightarrow \tilde{{\mathcal M}}$ is a conformal map and $\sigma(s)$ is an osculating curve on ${\mathcal M}$, then for the tangential component we have
\begin{eqnarray}\label{de2}
\tilde{\sigma}(s)\cdot {\bf T}= \delta^2 \left(\sigma \cdot {\bf T}\right)+h(E,F,G,\delta),
\end{eqnarray}
where
\begin{eqnarray}\label{de1}h(E,F,G,\delta)&=&
\nonumber \frac{\mu}{2\kappa}\Big[a\left(2{u^\prime}^2 \delta \delta_u E+4u^\prime v^\prime \delta \delta_v E+4{v^\prime}^2 \delta \delta_v F-2{v^\prime}^2 \delta \delta_u G\right)\\
&&+ b\left(4{u^\prime}^2\delta \delta_u F-2{u^\prime}^2 \delta \delta _v E+4{v^\prime}u^\prime \delta \delta_u G+2{v^\prime}^2 \delta \delta_v G\right)\Big].
\end{eqnarray}
\end{theorem}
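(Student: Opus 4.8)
The plan is to expand both sides of \eqref{de2} against an arbitrary tangent vector of the surface and to compare the results. Fix a generic tangent field ${\bf T}=a\,\Phi_u+b\,\Phi_v$ on ${\mathcal M}$, with $a=a(u,v)$ and $b=b(u,v)$, and let ${\bf T}=a\,\tilde\Phi_u+b\,\tilde\Phi_v$ denote the corresponding field on $\tilde{\mathcal M}$ (so that, up to the factor $\delta$, the latter is $\mathcal{G}_\ast{\bf T}$). Taking the inner product of the expansion \eqref{w2.1} of $\sigma$ with ${\bf T}$ and using $\Phi_u\cdot\Phi_u=E$, $\Phi_u\cdot\Phi_v=F$, $\Phi_v\cdot\Phi_v=G$ together with the classical second-order identities $\Phi_{uu}\cdot\Phi_u=\tfrac{1}{2}E_u$, $\Phi_{uu}\cdot\Phi_v=F_u-\tfrac{1}{2}E_v$, $\Phi_{uv}\cdot\Phi_u=\tfrac{1}{2}E_v$, $\Phi_{uv}\cdot\Phi_v=\tfrac{1}{2}G_u$, $\Phi_{vv}\cdot\Phi_u=F_v-\tfrac{1}{2}G_u$, $\Phi_{vv}\cdot\Phi_v=\tfrac{1}{2}G_v$, one obtains an explicit formula for $\sigma\cdot{\bf T}$ in terms of $\xi$, of $\mu/\kappa$, of $E,F,G$ and their first partials, of $a,b$, and of $u^\prime,v^\prime,u^{\prime\prime},v^{\prime\prime}$.

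Next I would run the identical computation on $\tilde{\mathcal M}$, obtaining $\tilde\sigma\cdot{\bf T}$ with every quantity replaced by its tilde analogue, and then substitute the conformal scaling $\tilde E=\delta^2E$, $\tilde F=\delta^2F$, $\tilde G=\delta^2G$, its differentiated consequences $\tilde E_u=2\delta\delta_u E+\delta^2E_u$, $\tilde E_v=2\delta\delta_v E+\delta^2E_v$ and likewise for $\tilde F_u,\tilde F_v,\tilde G_u,\tilde G_v$, and the normalisation $\tilde\mu/\tilde\kappa=\mu/\kappa$ together with the companion scaling of $\tilde\xi$ fixed in the remark following the first theorem of this section. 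The crucial point is that when the tilde partials $\tilde E_u,\dots,\tilde G_v$ are expanded this way, each splits into a $\delta^2\times(\text{old partial})$ piece and a $2\delta\delta_\bullet\times(\text{first-form coefficient})$ piece; the $\delta^2$-pieces, together with the automatically $\delta^2$-scaled contributions of $\tilde\xi$, of $\tilde\mu/\tilde\kappa$, and of the $u^{\prime\prime},v^{\prime\prime}$ terms, reconstitute exactly $\delta^2(\sigma\cdot{\bf T})$, while every remaining summand carries a factor $\delta\delta_u$ or $\delta\delta_v$.

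To finish, I would group the remaining summands according to the coefficients $a$ and $b$: the $a$-part assembles into $\tfrac{\mu}{2\kappa}\,a\bigl(2{u^\prime}^2\delta\delta_u E+4u^\prime v^\prime\delta\delta_v E+4{v^\prime}^2\delta\delta_v F-2{v^\prime}^2\delta\delta_u G\bigr)$ and the $b$-part into $\tfrac{\mu}{2\kappa}\,b\bigl(4{u^\prime}^2\delta\delta_u F-2{u^\prime}^2\delta\delta_v E+4u^\prime v^\prime\delta\delta_u G+2{v^\prime}^2\delta\delta_v G\bigr)$, whose sum is exactly $h(E,F,G,\delta)$ of \eqref{de1}; this establishes \eqref{de2}. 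The only genuine obstacle is organisational: one must simultaneously keep track of the six Christoffel-type products, the six first partials of the two conformal coefficients, and the $u^{\prime\prime},v^{\prime\prime}$ contributions, and then verify that the non-$\delta^2$ remainder matches \eqref{de1} term by term. No geometric ingredient beyond \eqref{w2.1}, the first-fundamental-form identities above, and the conformal scaling of $E,F,G$ is required, so, as with the earlier theorems of this section, the argument amounts to a somewhat lengthy manipulation of inner products.
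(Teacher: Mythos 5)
Your proposal is correct and follows essentially the same route as the paper: expand $\sigma\cdot{\bf T}$ and $\tilde\sigma\cdot{\bf T}$ via \eqref{w2.1} and the identities \eqref{a2.9}--\eqref{a2.10}, apply the conformal scalings \eqref{z2.8} and their derivatives \eqref{12a} together with the convention $\tilde\mu/\tilde\kappa=\mu/\kappa$, and collect the non-$\delta^2$ remainder into $h(E,F,G,\delta)$. The term-by-term bookkeeping you describe is exactly the computation carried out in the paper's proof.
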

\begin{proof}
Let $\tilde{\mathcal{M}}$ be the conformal image of ${\mathcal{M}}$ and $\Phi(u,v)$ and $\tilde{\Phi}(u,v)=\mathcal{G}\circ \Phi(u,v) $ be the surface patches of ${\mathcal{M}}$ and $\tilde{{\mathcal{M}}},$ respectively. We know that 
\begin{equation}\label{z2.8}
\delta^2E=\tilde{E},\quad  \delta^2F=\tilde{F},\quad  \delta^2G=\tilde{G}.
\end{equation}
This implies that
\begin{eqnarray}\label{12a}
\left\{
\begin{array}{ll}
\tilde{E}_u=2\delta \delta_u E + \delta^2 E_u, \quad \tilde{E}_v=2\delta \delta_v E + \delta^2 E_v,\\
\tilde{F}_u=2\delta \delta_u F + \delta^2 F_u, \quad \tilde{F}_v=2\delta \delta_v F + \delta^2 F_v,\\
\tilde{G}_u=2\delta \delta_u G + \delta^2 G_u, \quad \tilde{G}_v=2\delta \delta_v G + \delta^2 G_v.
\end{array}
\right.
\end{eqnarray}
Now, we have
$$E_u=(\Phi_u \cdot \Phi_u)_u=2\Phi_{uu}\cdot \Phi_u$$
\begin{equation}\label{a2.9}
\Phi_{uu}\cdot \Phi_u=\frac{E_u}{2}.
\end{equation}
Similarly, it is easy to check that
\begin{eqnarray}\label{a2.10}
\begin{array}{ll}
\Phi_{uu}\cdot \Phi_v=F_u-\frac{E_v}{2}, \quad \Phi_{uv}\cdot \Phi_u=\frac{E_v}{2}, \quad \Phi_{uv}\cdot \Phi_v=\frac{G_u}{2},\\
\Phi_{vv}\cdot \Phi_v=\frac{G_v}{2}, \quad \Phi_{vv}\cdot \Phi_u=F_v-\frac{G_u}{2}.
\end{array}
\end{eqnarray}
Thus from (\ref{w2.1}), (\ref{a2.9}) and (\ref{a2.10}), we can easily deduce
\begin{equation}
\sigma(s)\cdot \Phi_u = \xi(s)(u^\prime E + v^\prime F)+\frac{\mu(s)}{2\kappa(s)}[2u^{\prime \prime}E+2v^{\prime \prime} F +{u^\prime}^2E_u +2u^\prime v^\prime E_v +2{v^\prime}^2F_v-{v^\prime}^2G_u].
\end{equation}
Similarly
\begin{equation}
\sigma(s)\cdot \Phi_v = \xi(s)(u^\prime F + v^\prime G)+\frac{\mu(s)}{2\kappa(s)}[2u^{\prime \prime}F+2v^{\prime \prime} G +2{u^\prime}^2F_u -{u^\prime}^2 E_v +2{v^\prime}u^\prime G_u+{v^\prime}^2G_v].
\end{equation}
Now if $\tilde{\sigma}$ be an osculating curve on $\tilde{\mathcal{M}}$ and ${\bf T}=a\Phi_u + b\Phi_v$ be the tangent vector of $\tilde{\mathcal{M}}$ at $\tilde{\sigma}(s)$, we have
\begin{eqnarray*}
\tilde{\sigma}(s)\cdot (a\tilde{\Phi}_u +b \tilde{\Phi}_v)&=&a\Big[\tilde{\xi}(s)(u^\prime \tilde{E} + v^\prime \tilde{F})+\frac{\tilde{\mu}(s)}{2\tilde{\kappa}(s)}\Big(2u^{\prime \prime}\tilde{E}+2v^{\prime \prime} \tilde{F} +{u^\prime}^2\tilde{E}_u\\
&& +2u^\prime v^\prime \tilde{E}_v +2{v^\prime}^2\tilde{F}_v-{v^\prime}^2\tilde{G}_u\Big)\Big]+b\Big[\tilde{\xi}(s)(u^\prime \tilde{F} + v^\prime \tilde{G})\\
&&+\frac{\tilde{\mu}(s)}{2\tilde{\kappa}(s)}\Big(2u^{\prime \prime}\tilde{F}+2v^{\prime \prime} \tilde{G} +2{u^\prime}^2\tilde{F}_u -{u^\prime}^2 \tilde{E}_v +2{v^\prime}u^\prime \tilde{G}_u+{v^\prime}^2\tilde{G}_v\Big)\Big],
\end{eqnarray*}
or
\begin{eqnarray*}
\tilde{\sigma}(s)\cdot {\bf T}-\delta^2 [\sigma(s)\cdot {\bf T}]&=&
\frac{\mu}{2\kappa}\Big[a\left(2{u^\prime}^2 \delta \delta_u E+4u^\prime v^\prime \delta \delta_v E+4{v^\prime}^2 \delta \delta_v F-2{v^\prime}^2 \delta \delta_u G\right)\\
&&+ b\left(4{u^\prime}^2\delta \delta_u F-2{u^\prime}^2 \delta \delta _v E+4{v^\prime}u^\prime \delta \delta_u G+2{v^\prime}^2 \delta \delta_v G\right)\Big].
\end{eqnarray*}
\end{proof}
\begin{corollary}
Let $\mathcal{G}:{\mathcal{M}}\rightarrow {\tilde{\mathcal{M}}}$ be a homothetic conformal map between two smooth surfaces and $\sigma(s)$ be an osculating curve on $\mathcal{M}$. Then the tangential component is homothetic invariant.
\end{corollary}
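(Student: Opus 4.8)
The plan is to read this corollary off the preceding theorem, namely~(\ref{de2}), by specializing the dilation factor to a constant. Recall that for a conformal map $\mathcal{G}$ and an osculating curve $\sigma$ on $\mathcal{M}$ that theorem asserts
\begin{equation*}
\tilde{\sigma}(s)\cdot {\bf T}= \delta^2 \left(\sigma \cdot {\bf T}\right)+h(E,F,G,\delta),
\end{equation*}
where, by~(\ref{de1}), the correction term $h(E,F,G,\delta)$ is a linear combination of $\delta\delta_u E$, $\delta\delta_v E$, $\delta\delta_v F$, $\delta\delta_u G$, $\delta\delta_u F$ and $\delta\delta_v G$, with coefficients built from $u'$, $v'$, $\mu$, $\kappa$, $a$ and $b$ only.

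First I would invoke the hypothesis that $\mathcal{G}$ is homothetic. As recorded in the discussion following the definition of a conformal map, this means precisely that the dilation factor is a non-zero constant, say $\delta\equiv c$ with $c\neq\{0,1\}$, so that $\delta_u\equiv 0$ and $\delta_v\equiv 0$. Substituting these into~(\ref{de1}) kills every summand of $h$, since each one carries a factor $\delta_u$ or $\delta_v$; hence $h(E,F,G,c)=0$ and~(\ref{de2}) collapses to
\begin{equation*}
\tilde{\sigma}(s)\cdot {\bf T}= c^2\left(\sigma(s)\cdot {\bf T}\right).
\end{equation*}

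To finish, I would interpret this in the language of Definition~\ref{def1}: taking $g$ to be the smooth function $p\mapsto\sigma\cdot {\bf T}$ on $\mathcal{M}$ and $\tilde{g}$ its counterpart $\tilde{\sigma}\cdot {\bf T}$ on $\tilde{\mathcal{M}}$, the displayed identity is exactly $\tilde{g}=c^2 g$ with $c\neq\{0,1\}$, which is the definition of $g$ being homothetic invariant. Thus the tangential component of $\sigma$ is homothetic invariant.

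I do not expect any genuine obstacle: the whole argument is a single substitution $\delta_u=\delta_v=0$. The only points to be careful about are that ``homothetic'' is used here in the same sense as in the earlier corollaries (constant $\delta$), and that the tangent vector ${\bf T}=a\Phi_u+b\Phi_v$ to $\tilde{\mathcal{M}}$ occurring in~(\ref{de2}) is indeed the object whose $\sigma$-component is meant by ``the tangential component''; both are clear from the set-up of the preceding theorem.
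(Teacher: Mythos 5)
Your proposal is correct and is essentially the paper's own argument: the paper also proves this corollary by setting $\delta(u,v)=c$ in (\ref{de2}) and (\ref{de1}), so that every term of $h$ vanishes with $\delta_u=\delta_v=0$ and the identity $\tilde{\sigma}\cdot{\bf T}=c^2(\sigma\cdot{\bf T})$ gives homothetic invariance in the sense of Definition~\ref{def1}. Your write-up just spells out the substitution and the appeal to the definition more explicitly than the paper does.
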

\begin{proof}
The claim directly follows from (\ref{de2}) and (\ref{de1}) while assuming $\delta(u,v)=c$. 
\end{proof}
\begin{corollary}\cite{10}
Let $\mathcal{G}:{\mathcal{M}}\rightarrow {\tilde{\mathcal{M}}}$ be an isometry  and $\sigma(s)$ be an osculating curve on $\mathcal{M}$. Then the tangential component of $\sigma$ remains invariant.
\end{corollary}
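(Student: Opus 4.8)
The plan is to derive this corollary as an immediate specialization of the preceding Theorem on the tangential component of an osculating curve under a conformal map. The key observation is that an isometry is precisely a conformal map whose dilation factor is the constant function $\delta \equiv 1$; consequently $\delta_u = \delta_v = 0$ and $\delta^2 = 1$ identically on ${\mathcal{M}}$. So the entire argument reduces to feeding $\delta = 1$ into formulas (\ref{de2}) and (\ref{de1}) and checking that nothing nontrivial survives.

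Concretely, I would first invoke (\ref{de2}), namely $\tilde{\sigma}(s)\cdot {\bf T} = \delta^2(\sigma\cdot {\bf T}) + h(E,F,G,\delta)$, together with the explicit expression (\ref{de1}) for $h$. Every summand of $h$ carries a factor of the form $\delta\delta_u$ or $\delta\delta_v$, so substituting $\delta \equiv 1$ (hence $\delta_u = \delta_v = 0$) forces $h(E,F,G,1) = 0$; at the same time the leading coefficient $\delta^2$ becomes $1$. Therefore $\tilde{\sigma}(s)\cdot {\bf T} = \sigma(s)\cdot {\bf T}$, which is exactly the claim that the tangential component of $\sigma$ is invariant under the isometry $\mathcal{G}$. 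For completeness I would also record that the hypotheses needed for (\ref{de2}) are satisfied: the image of an osculating curve under an isometry is again an osculating curve (this is the isometric corollary of the first Theorem, using the stated convention $\tilde{\mu}/\tilde{\kappa} = \mu/\kappa$), so the decomposition $\tilde{\sigma} = \tilde{\xi}\,\tilde{\vec t} + (\tilde{\mu}/\tilde{\kappa})\,\tilde{\vec n}$ on $\tilde{\mathcal{M}}$ is legitimate, and $\tilde{E} = E$, $\tilde{F} = F$, $\tilde{G} = G$ by the Definition of isometry.

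The main obstacle is essentially nonexistent at this level: all the real computational work — the inner-product identities (\ref{a2.9})--(\ref{a2.10}) for $\Phi_{uu}\cdot\Phi_u$, $\Phi_{uu}\cdot\Phi_v$, etc., and the resulting expressions for $\sigma\cdot\Phi_u$ and $\sigma\cdot\Phi_v$ — has already been carried out in the proof of the Theorem. The only point that deserves an explicit sentence is that no hidden $u,v$-dependence can re-enter $h$ when $\delta$ is taken constant, and this is clear because $h$ is a finite sum each of whose terms is proportional to a first partial derivative of $\delta$. Hence the specialization $\delta\equiv 1$ genuinely annihilates $h$, and the corollary follows.
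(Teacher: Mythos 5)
Your proposal is correct and follows essentially the same route the paper intends: the paper treats this corollary (like its homothetic counterpart, proved by ``directly follows from (\ref{de2}) and (\ref{de1}) while assuming $\delta(u,v)=c$'') as an immediate specialization of the tangential-component theorem, here with $\delta\equiv 1$ so that $\delta_u=\delta_v=0$, $h$ vanishes, and $\tilde{\sigma}\cdot{\bf T}=\sigma\cdot{\bf T}$. Nothing further is needed.
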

\begin{theorem}
Let $\mathcal{G}:{\mathcal{M}}\rightarrow {\tilde{\mathcal{M}}}$ be a conformal map between two smooth surfaces and $\sigma(s)$ be an osculating curve on $\mathcal{M}$. Then for the geodesic curvature, we have
\begin{equation}\label{q23}
\tilde{\kappa}_g=\delta^2\kappa_g+f(E,F,G,\delta),
\end{equation}
where 
\begin{equation*}
f(E,F,G,\delta)=\Big[\epsilon_{11}^2{u^\prime}^3+(2\epsilon_{12}^2-\epsilon_{11}^1){u^\prime}^2 v^\prime +(\epsilon_{22}^2-2\epsilon_{12}^1)u^\prime {v^\prime}^2-\epsilon_{22}^1{v^\prime}^3\Big] \sqrt{EG-F^2}.
\end{equation*}
\end{theorem}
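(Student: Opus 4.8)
The plan is to reduce the statement to two local computations in the arc-length parameter $s$, with $\sigma(s)=\Phi(u(s),v(s))$ and $\tilde\sigma(s)=\tilde\Phi(u(s),v(s))$, $\tilde\Phi=\mathcal{G}\circ\Phi$: first a coordinate formula for the geodesic curvature in terms of the first fundamental form, and then the behaviour of the Christoffel symbols under a conformal change of that form. For the first, I would start from $\sigma''=\kappa_g\,{\bf N}\times\sigma'+\kappa_n{\bf N}$ and dot it with the unit vector ${\bf N}\times\sigma'$, which gives $\kappa_g=[\sigma',\sigma'',{\bf N}]=\frac{1}{W}\big[(\sigma'\cdot\Phi_u)(\sigma''\cdot\Phi_v)-(\sigma'\cdot\Phi_v)(\sigma''\cdot\Phi_u)\big]$ with $W=\sqrt{EG-F^2}$. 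Substituting $\sigma'=\Phi_uu'+\Phi_vv'$ from (\ref{2}) and the expansion of $\sigma''=\vec{t}'$ from (\ref{3}), and then replacing the dot products $\Phi_{uu}\cdot\Phi_u,\ \Phi_{uu}\cdot\Phi_v,\ \Phi_{uv}\cdot\Phi_u,\ \Phi_{uv}\cdot\Phi_v,\ \Phi_{vv}\cdot\Phi_u,\ \Phi_{vv}\cdot\Phi_v$ by the quantities (\ref{a2.9})--(\ref{a2.10}) already established in the paper, one collects powers of $u',v'$ into the classical Liouville form
$$\kappa_g=\sqrt{EG-F^2}\,\Big[\Gamma_{11}^2{u'}^3+(2\Gamma_{12}^2-\Gamma_{11}^1){u'}^2v'+(\Gamma_{22}^2-2\Gamma_{12}^1)u'{v'}^2-\Gamma_{22}^1{v'}^3+u'v''-u''v'\Big],$$
where the $\Gamma^k_{ij}$ are the Christoffel symbols of the second kind, functions of $E,F,G$ and their first derivatives. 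Carrying out the same computation with $\tilde\Phi$ in place of $\Phi$ (using the analogues of (\ref{a2.9})--(\ref{a2.10}) for $\tilde E,\tilde F,\tilde G$) yields the identical expression for $\tilde\kappa_g$ with $E,F,G,\Gamma$ replaced by $\tilde E,\tilde F,\tilde G,\tilde\Gamma$.

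Next I would feed in the conformal relations $\tilde E=\delta^2E$, $\tilde F=\delta^2F$, $\tilde G=\delta^2G$. These give at once $\sqrt{\tilde E\tilde G-\tilde F^2}=\delta^2\sqrt{EG-F^2}$; and, after differentiating them --- which is precisely (\ref{12a}) --- and substituting into $\tilde\Gamma^k_{ij}=\frac{1}{2}\tilde g^{kl}(\partial_i\tilde g_{jl}+\partial_j\tilde g_{il}-\partial_l\tilde g_{ij})$ with $\tilde g^{ij}=\delta^{-2}g^{ij}$, the factors of $\delta$ cancel down to $\tilde\Gamma^k_{ij}=\Gamma^k_{ij}+\epsilon^k_{ij}$, where each $\epsilon^k_{ij}$ is an explicit expression in $E,F,G,\delta$ and the first derivatives of $\delta$ --- the $\epsilon^k_{ij}$ appearing in the statement. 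Substituting $\tilde\Gamma=\Gamma+\epsilon$ and $\sqrt{\tilde E\tilde G-\tilde F^2}=\delta^2\sqrt{EG-F^2}$ into the formula for $\tilde\kappa_g$ and splitting off the two groups of terms, the $\Gamma$-terms together with $u'v''-u''v'$ reassemble $\delta^2\kappa_g$, while the remaining terms collect exactly to
$$\Big[\epsilon_{11}^2{u'}^3+(2\epsilon_{12}^2-\epsilon_{11}^1){u'}^2v'+(\epsilon_{22}^2-2\epsilon_{12}^1)u'{v'}^2-\epsilon_{22}^1{v'}^3\Big]\sqrt{EG-F^2}=f(E,F,G,\delta),$$
the overall $\delta$-weight being fixed by the paper's normalization of $\tilde\kappa$ (the convention introduced just after the first theorem of this section). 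This is (\ref{q23}).

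The one genuinely delicate point is the parametrization. Because $\|\tilde\sigma'\|_{\tilde g}=\delta\,\|\sigma'\|_g=\delta$, the image curve $\tilde\sigma$ is not unit speed in $s$, so strictly the Liouville formula has to be applied in the arc-length parameter $\tilde s$ of $\tilde{\mathcal M}$ (with $d\tilde s$ a $\delta$-multiple of $ds$, as in the remark on arc elements in Section 2) and then re-expressed in $s$; equivalently one uses the homogeneous form of the formula and carries the factor $(\tilde E{u'}^2+2\tilde Fu'v'+\tilde G{v'}^2)^{3/2}=\delta^{3}(E{u'}^2+2Fu'v'+G{v'}^2)^{3/2}$. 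Reconciling these powers of $\delta$ with those coming from $\sqrt{\tilde E\tilde G-\tilde F^2}$ and from the $\epsilon^k_{ij}$, so that the right-hand side comes out as the stated $\delta^2\kappa_g+f$, is the main bookkeeping hurdle; once a single convention is pinned down, matching the coefficients of ${u'}^3,\ {u'}^2v',\ u'{v'}^2,\ {v'}^3$ on the two sides is purely mechanical.
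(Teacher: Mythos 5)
Your proposal follows essentially the same route as the paper: write the geodesic curvature in the Liouville form $\kappa_g=\sqrt{EG-F^2}\,\big[\Gamma_{11}^2{u'}^3+(2\Gamma_{12}^2-\Gamma_{11}^1){u'}^2v'+(\Gamma_{22}^2-2\Gamma_{12}^1)u'{v'}^2-\Gamma_{22}^1{v'}^3+u'v''-u''v'\big]$ (the paper gets there via the Shaikh--Ghosh expansion of $\sigma''\cdot({\bf N}\times\sigma')$ together with the identities (\ref{a2.9})--(\ref{a2.10})), then use $\tilde E=\delta^2E$, $\tilde F=\delta^2F$, $\tilde G=\delta^2G$ to obtain $\tilde\Gamma_{ij}^k=\Gamma_{ij}^k+\vartheta_{ij}^k$ as in (\ref{q13})--(\ref{q16}) and substitute. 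The $\delta$-power bookkeeping you flag at the end (reparametrization of $\tilde\sigma$ and the extra $\delta^2$ from $\sqrt{\tilde E\tilde G-\tilde F^2}$) is precisely the step the paper passes over silently in its final substitution, so on that point your account is if anything more careful than the printed proof.
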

\begin{proof}
Let $\sigma(s)$ be an osculating curve on $\mathcal{M}$ and $\tilde{\sigma}=\mathcal{G}\circ \sigma$ be the conformal image of $\sigma$ on $\tilde{\mathcal{M}}$. From the definition of geodesic curvature
\begin{equation*}
\kappa_g=\sigma^{\prime \prime}\cdot ({\bf N}\times \sigma^\prime).
\end{equation*}
In \cite{8} Shaikh and Ghosh showed that for such a $\sigma$, we have
\begin{eqnarray*}
\kappa_g &=& u^\prime v^{\prime \prime}(EF-FE)+v^\prime u^{\prime \prime}(F^2- GE)+u^\prime v^{\prime \prime}(EG-F^2)+v^\prime v^{\prime \prime}(FG-GF)\\
&&+{u^\prime}^3(E\Phi_{uu}\cdot \Phi_v -F \Phi_{uu}\cdot \Phi_u)+{u^\prime}^2v^\prime (F\Phi_{uu}\cdot \Phi_v -G\Phi_{uu}\cdot \Phi_u)\\
&&+2{u^\prime}^2v^\prime(E\Phi_{uv}\cdot \Phi_v-F\Phi_{uv}\cdot \Phi_u)+2u^\prime{v^\prime}^2(F\Phi_{uv}\cdot \Phi_v-G\Phi_{uv}\cdot \Phi_u)\\
&&+ u^\prime{v^\prime}^2(E\Phi_{vv}\cdot \Phi_v - F \Phi_{vv}\cdot \Phi_u)+{v^\prime}^3(F\Phi_{vv}\cdot \Phi_v -G \Phi_{vv}\cdot \Phi_{u}).
\end{eqnarray*}
With the help of (\ref{a2.9}) and (\ref{a2.10}), the above equation turns out to be
\begin{eqnarray}
\nonumber \kappa_g&=&(u^\prime v^{\prime\prime}-v^\prime u^{\prime\prime})(EG-F^2)+\frac{1}{2}{u^\prime}^3(2EF_u-EE_v-FE_u)\\
\nonumber &&+\frac{1}{2}{u^\prime}^2v^\prime(2FF_u-FE_v-GE_u)+{u^\prime}^2v^\prime(EG_u-FE_v)+u^\prime{v^\prime
}^2(FG_u-GE_v)\\
\label{21k}&&+\frac{1}{2}u^\prime{v^\prime}^2(EG_v-2FF_v+FG_u)+\frac{1}{2}{v^\prime}^3(FG_v-2GF_v+GG_u).
\end{eqnarray}
In addition, let $\Gamma_{ij}^k$ be the Christoffel symbols of second kind given by
\begin{equation}\label{q12}\left\{
\begin{array}{ll}
\Gamma_{11}^1=\frac{1}{2W^2}\left\{GE_u+F[E_v-2F_u]\right\}, \quad \Gamma_{22}^2=\frac{1}{2W^2}\left\{EG_v+F[G_v-2F_v]\right\}\\
\Gamma_{11}^2=\frac{1}{2W^2}\left\{E[2F_u-E_v]-FE_v\right\},\quad \Gamma_{22}^1=\frac{1}{2W^2}\left\{G[2F_v-G_u]-FG_v\right\}\\
\Gamma_{12}^2=\frac{1}{2W^2}\left\{EG_u-FE_v\right\}=\Gamma_{21}^2,\quad
\Gamma_{21}^1=\frac{1}{2W^2}\left\{GE_v-FG_u\right\}=\Gamma_{12}^1,
\end{array}
\right.
\end{equation}
where $W=\sqrt{EG-F^2}$.
After conformal motion, the Christoffel symbols turns out to be
\begin{eqnarray}\label{q13}
\begin{array}{ll}
\tilde{\Gamma}_{11}^1=\Gamma_{11}^1 +\vartheta_{11}^1,\quad \tilde{\Gamma}_{11}^2=\Gamma_{11}^2 +\vartheta_{11}^2, \quad \tilde{\Gamma}_{12}^1=\Gamma_{12}^1 +\vartheta_{12}^1,\\
\tilde{\Gamma}_{12}^2=\Gamma_{12}^2 +\vartheta_{12}^2,\quad \tilde{\Gamma}_{22}^1=\Gamma_{22}^1 +\vartheta_{22}^1, \quad \tilde{\Gamma}_{22}^2=\Gamma_{22}^2 +\vartheta_{22}^2,
\end{array}
\end{eqnarray}
where
\begin{eqnarray}\label{q16}
\left\{\begin{array}{ll}
\vartheta_{11}^1=
\frac{EG\delta_u -2F^2\delta_u +FE\delta_v}{\delta W^2},\quad \vartheta_{11}^2=
\frac{EF\delta_u -E^2 \delta_v}{\delta W^2},\vspace{.1cm}\\
\vartheta_{12}^1=\frac{EG\delta_v-FG\delta_u}{\delta W^2},
\quad \vartheta_{12}^2=\frac{EG\delta_u - FE \delta_v}{\delta W^2},\vspace{.1cm}\\
\vartheta_{22}^1=\frac{GF\delta_v - G^2 \delta_u}{\delta W^2},\quad \vartheta_{22}^2=\frac{EG\delta_v -2F^2 \delta_v +FG\delta_u}{\delta W^2}.
\end{array}\right.
\end{eqnarray}
Using (\ref{q12}) in (\ref{21k}), we get
\begin{equation}\label{w23}
\kappa_g=\Big[\Gamma_{11}^2{u^\prime}^3+(2\Gamma_{12}^2-\Gamma_{11}^1){u^\prime}^2 v^\prime +(\Gamma_{22}^2-2\Gamma_{12}^1)u^\prime {v^\prime}^2-\Gamma_{22}^1{v^\prime}^3+u^\prime v^{\prime\prime}-u^{\prime \prime}v^\prime\Big] \sqrt{EG-F^2}.
\end{equation}
In view of (\ref{q13}) and the above equation, $\tilde{\kappa}_g$ is given by
\begin{equation}\label{q23}
\tilde{\kappa}_g=\delta^2\kappa_g+\Big[\epsilon_{11}^2{u^\prime}^3+(2\epsilon_{12}^2-\epsilon_{11}^1){u^\prime}^2 v^\prime +(\epsilon_{22}^2-2\epsilon_{12}^1)u^\prime {v^\prime}^2-\epsilon_{22}^1{v^\prime}^3\Big] \sqrt{EG-F^2}.
\end{equation}
This proves the claim.
\end{proof}
\begin{corollary}
Let $\mathcal{G}$ be homothetic conformal map between two smooth surfaces $\mathcal{M}$ and $\tilde{\mathcal{M}}$. Then the geodesic curvature of rectifying curve is homothetic invariant under $\mathcal{G}$.
\end{corollary}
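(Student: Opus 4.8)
The plan is to read the result straight off the theorem that immediately precedes it. For a conformal map $\mathcal{G}$ with dilation factor $\delta(u,v)$ that theorem gives, via (\ref{q23}),
\[
\tilde{\kappa}_g = \delta^2 \kappa_g + f(E,F,G,\delta),
\]
where $f$ is built entirely out of the conformal correction terms $\vartheta_{ij}^k$ of (\ref{q16}) (these are the quantities written $\epsilon_{ij}^k$ in the displayed formula for $f$, i.e. $\vartheta_{ij}^k=\tilde{\Gamma}_{ij}^k-\Gamma_{ij}^k$, cf. (\ref{q13})). So the whole argument reduces to showing that $f$ vanishes when $\mathcal{G}$ is homothetic.

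First I would specialise to the homothetic case by putting $\delta(u,v)\equiv c$ with $c\notin\{0,1\}$, as in Definition \ref{def1}; then $\delta_u=\delta_v=0$ identically on $\mathcal{M}$. Next, inspecting the six expressions in (\ref{q16}), every numerator of $\vartheta_{ij}^k$ is a linear combination of $\delta_u$ and $\delta_v$ with coefficients polynomial in $E,F,G$; hence each $\vartheta_{ij}^k$ — equivalently each $\epsilon_{ij}^k$ — is identically zero. Substituting into the expression for $f(E,F,G,\delta)$ from the theorem yields $f\equiv 0$, so (\ref{q23}) collapses to $\tilde{\kappa}_g=c^2\kappa_g$. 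By Definition \ref{def1} this is precisely the assertion that the geodesic curvature of $\sigma$ is homothetic invariant under $\mathcal{G}$, which finishes the proof.

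There is essentially no obstacle here; the only point worth care is the bookkeeping identification of the symbols $\epsilon_{ij}^k$ in the theorem's formula with the $\vartheta_{ij}^k$ of (\ref{q13})--(\ref{q16}), after which the vanishing is immediate from $\delta_u=\delta_v=0$. If one preferred not to invoke the theorem at all, one could argue directly from (\ref{w23}): when $\delta\equiv c$ one has $\tilde{E}_u=c^2E_u$, $\tilde{F}_u=c^2F_u$, and so on, while $\widetilde{W}^2=\tilde{E}\tilde{G}-\tilde{F}^2=c^4W^2$, so every ratio defining the Christoffel symbols in (\ref{q12}) is unchanged, $\tilde{\Gamma}_{ij}^k=\Gamma_{ij}^k$, whereas the prefactor $\sqrt{EG-F^2}$ acquires a factor $c^2$; this reproduces $\tilde{\kappa}_g=c^2\kappa_g$ directly. (I also note in passing that the statement says ``rectifying curve'' while the section concerns osculating curves; the argument is insensitive to this, since geodesic curvature of a surface curve is computed the same way regardless of the curve's type.)
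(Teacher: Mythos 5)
Your proposal is correct and follows essentially the same route as the paper: the paper's proof simply sets $\delta=c$ and observes that the vanishing of the correction terms in (\ref{q16}) makes (\ref{q23}) collapse to $\tilde{\kappa}_g=c^2\kappa_g$. Your identification of the $\epsilon_{ij}^k$ with the $\vartheta_{ij}^k$ and your remark on the ``rectifying''/``osculating'' slip are both accurate, and the alternative direct check via (\ref{q12}) and (\ref{w23}) is a harmless bonus.
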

\begin{proof}
Let us suppose $\delta=c,$ then the proof is a direct implication of (\ref{q16}) and (\ref{q23}).
\end{proof} 
\begin{corollary}\cite{8}
Let $\mathcal{G}$ be an isometry between two smooth surfaces $\mathcal{M}$ and $\tilde{\mathcal{M}}$. Then the geodesic curvature of rectifying curve is invariant under $\mathcal{G}$.
\end{corollary}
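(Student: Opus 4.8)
The plan is to deduce this corollary directly from the preceding theorem by specializing the dilation factor, rather than re-deriving any geodesic-curvature formula. Recall that, as recorded just after Definition \ref{def1} in the preliminaries, an isometry is exactly a conformal map whose dilation factor is constantly equal to one, i.e.\ $\delta \equiv 1$. So the entire statement will follow once I substitute $\delta = 1$ into the transformation formula $\tilde{\kappa}_g = \delta^2 \kappa_g + f(E,F,G,\delta)$ of the theorem and verify that the correction term vanishes identically.

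First I would record the two immediate consequences of $\delta \equiv 1$. Since $\delta$ is a constant function on $\mathcal{M}$, both of its first partial derivatives vanish, $\delta_u = \delta_v = 0$; and $\delta^2 = 1$, so the leading term collapses to $\delta^2 \kappa_g = \kappa_g$. Next I would turn to the correction term $f(E,F,G,\delta)$. By (\ref{q13}) the quantities $\epsilon_{ij}^k$ occurring in $f$ are precisely the differences $\vartheta_{ij}^k = \tilde{\Gamma}_{ij}^k - \Gamma_{ij}^k$ listed in (\ref{q16}). Inspecting those six expressions, each $\vartheta_{ij}^k$ is a linear combination of $\delta_u$ and $\delta_v$, with coefficients assembled from $E$, $F$, $G$, divided by $\delta W^2$. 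Setting $\delta_u = \delta_v = 0$ therefore forces every $\epsilon_{ij}^k = \vartheta_{ij}^k = 0$ simultaneously, so that $f(E,F,G,\delta) = 0$ identically, independently of the velocity and acceleration data $u',v',u'',v''$ of the curve.

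Combining these two observations in the transformation formula then yields $\tilde{\kappa}_g = \kappa_g$, which is the asserted invariance of the geodesic curvature under the isometry $\mathcal{G}$. There is essentially no obstacle in this argument; the only point requiring genuine (but entirely routine) care is the verification that each of the six $\vartheta$-terms in (\ref{q16}) truly carries a factor of $\delta_u$ or $\delta_v$ in its numerator, so that they all drop out together when $\delta$ is constant. This algebraic collapse is the shadow of the classical geometric principle that geodesic curvature is an intrinsic invariant of the first fundamental form, which an isometry preserves through $E = \tilde{E}$, $F = \tilde{F}$, $G = \tilde{G}$; that remark can serve as a conceptual cross-check that the computation has been carried out correctly.
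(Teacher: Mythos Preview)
Your proposal is correct and follows exactly the approach implicit in the paper: specialize the dilation factor in the preceding theorem to $\delta\equiv 1$, so that by (\ref{q16}) every $\vartheta_{ij}^k$ vanishes and (\ref{q23}) reduces to $\tilde{\kappa}_g=\kappa_g$. This is precisely parallel to the paper's one-line proof of the homothetic corollary (take $\delta=c$ and appeal to (\ref{q16}) and (\ref{q23})), only with $c=1$.
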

\section{Conformal image of Geodesics}
In this section, we seek what happens to a geodesic, in particular an osculating curve on a smooth immersed surface under conformal transformation.  
\begin{definition}A vector field $X$ is said to be parallel along a curve $\sigma:I\rightarrow {\mathcal{M}}$ if $\frac{DX}{dx}=0$ for all $x\in I.$
\end{definition}
\begin{definition}
A non-constant curve $\sigma:I \rightarrow {\mathcal{M}}$ is said to be geodesic at $x\in I$ if its tangent vector field is parallel at $x\in I$, i.e., $\frac{D \sigma^\prime(x)}{dx}=0$ and $\sigma$ is said to be a geodesic on $\mathcal{M}$ if it is geodesic for all $x\in {\mathcal{M}}$.
\end{definition}
\begin{theorem}\label{thm4.3}
Let $\mathcal{G}:{\mathcal{M}}\rightarrow \tilde{\mathcal{M}}$ be a conformal map between two smooth surfaces $\mathcal{M}$ and ${\tilde{\mathcal{M}}}$ and $\sigma(s)$ be a geodesic on $\mathcal{M}$. Then $\tilde{\sigma}=\mathcal{G} \circ \sigma$ being a geodesic on $\tilde{\mathcal{M}}$ is equivalent to the following system of differential equations
\begin{equation}\label{conf1}
\left\{
\begin{array}{ll}
u^{\prime\prime}+ \Gamma_{11}^1{u^\prime}^2+2\Gamma_{12}^1{u^\prime}v^\prime +\Gamma_{22}^1{v^\prime}^2+f_1(E,F,G,\delta)=0\\
v^{\prime\prime}+ \Gamma_{11}^2{u^\prime}^2+2\Gamma_{12}^2{u^\prime}v^\prime +\Gamma_{22}^2{v^\prime}^2+f_2(E,F,G,\delta)=0.
\end{array}
\right.
\end{equation}
\end{theorem}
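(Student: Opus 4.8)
The plan is to derive the system (\ref{conf1}) directly from the coordinate form of the geodesic equations, using the conformal change of the first fundamental form to relate the Christoffel symbols of $\tilde{\mathcal{M}}$ to those of $\mathcal{M}$. Recall that for any surface patch $\Psi(u,v)$ with Christoffel symbols $\Gamma_{ij}^{k}$, a curve $\gamma(s)=\Psi(u(s),v(s))$ satisfies $\frac{D\gamma'}{ds}=0$ if and only if
\begin{equation*}
u''+\Gamma_{11}^{1}{u'}^{2}+2\Gamma_{12}^{1}u'v'+\Gamma_{22}^{1}{v'}^{2}=0,\qquad v''+\Gamma_{11}^{2}{u'}^{2}+2\Gamma_{12}^{2}u'v'+\Gamma_{22}^{2}{v'}^{2}=0.
\end{equation*}
First I would apply this on $\tilde{\mathcal{M}}$ to the curve $\tilde{\sigma}(s)=\tilde{\Phi}(u(s),v(s))$ with $\tilde{\Phi}=\mathcal{G}\circ\Phi$; since $\tilde{\sigma}$ and $\sigma$ share the same coordinate functions $u(s),v(s)$, the statement that $\tilde{\sigma}$ is a geodesic on $\tilde{\mathcal{M}}$ is equivalent to the same pair of equations with the tilded symbols $\tilde{\Gamma}_{ij}^{k}$ in place of $\Gamma_{ij}^{k}$.

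The second step is to express $\tilde{\Gamma}_{ij}^{k}$ through the data of $\mathcal{M}$. Conformality gives $\tilde{E}=\delta^{2}E$, $\tilde{F}=\delta^{2}F$, $\tilde{G}=\delta^{2}G$, hence $\tilde{W}=\sqrt{\tilde{E}\tilde{G}-\tilde{F}^{2}}=\delta^{2}W$, and differentiating these relations yields (\ref{12a}). Substituting (\ref{12a}) into the Christoffel formulas (\ref{q12}) written for $\tilde{\mathcal{M}}$ and splitting each numerator into the part free of $\delta_{u},\delta_{v}$ and the part linear in $\delta_{u},\delta_{v}$, the first part (divided by $\tilde{W}^{2}=\delta^{4}W^{2}$) reproduces $\Gamma_{ij}^{k}$, while the second assembles into the correction $\vartheta_{ij}^{k}$; this is exactly (\ref{q13}) with the $\vartheta_{ij}^{k}$ of (\ref{q16}). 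This short algebraic separation, forced by the homogeneity of the Christoffel formula in the first fundamental form, is the only genuinely computational part of the argument.

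Finally, substituting $\tilde{\Gamma}_{ij}^{k}=\Gamma_{ij}^{k}+\vartheta_{ij}^{k}$ into the geodesic system for $\tilde{\sigma}$ and collecting the $\vartheta$-terms as
\begin{equation*}
f_{1}(E,F,G,\delta)=\vartheta_{11}^{1}{u'}^{2}+2\vartheta_{12}^{1}u'v'+\vartheta_{22}^{1}{v'}^{2},\qquad f_{2}(E,F,G,\delta)=\vartheta_{11}^{2}{u'}^{2}+2\vartheta_{12}^{2}u'v'+\vartheta_{22}^{2}{v'}^{2},
\end{equation*}
turns that system into precisely (\ref{conf1}), which yields the asserted equivalence. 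One may further remark that, since $\sigma$ is already a geodesic on $\mathcal{M}$, the first four terms in each line of (\ref{conf1}) vanish and the condition collapses to $f_{1}=f_{2}=0$. I do not expect a real obstacle here: the only things to watch are keeping the bookkeeping of the six Christoffel symbols and their $\delta$-corrections consistent with (\ref{q16}), and treating the parameter $s$ uniformly on both surfaces.
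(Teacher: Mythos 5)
Your proposal is correct and follows essentially the same route as the paper: write the geodesic equations in coordinates, use the conformal splitting $\tilde{\Gamma}_{ij}^{k}=\Gamma_{ij}^{k}+\vartheta_{ij}^{k}$ coming from $\tilde{E}=\delta^{2}E$, $\tilde{F}=\delta^{2}F$, $\tilde{G}=\delta^{2}G$, and collect the $\vartheta$-terms into $f_{1}$ and $f_{2}$ exactly as in (\ref{conf1}). The only difference is cosmetic: you sketch the derivation of (\ref{q13}) and (\ref{q16}) from (\ref{12a}), which the paper simply quotes from its earlier geodesic-curvature computation.
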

\begin{proof}
Let $\sigma(s)= {\Phi}(u(s),v(s))$ be a parameterized geodesic on the surface ${\mathcal{M}}$ and $\Phi(u,v)$ be the coordinate chart of $\mathcal{M}$. Then, we have
\begin{equation*}
t(s)=\sigma^\prime(s)=\Phi_uu^\prime+\Phi_vv^\prime.
\end{equation*}
Taking the covariant derivative of the above expression, the parallel condition of the tangent vector field of $\sigma$ is obtained as
\begin{equation}\label{r4}
\Big(u^{\prime\prime}+ \Gamma_{11}^1{u^\prime}^2+2\Gamma_{12}^1{u^\prime}v^\prime +\Gamma_{22}^1{v^\prime}^2\Big)\Phi_u + \Big(v^{\prime\prime}+ \Gamma_{11}^2{u^\prime}^2+2\Gamma_{12}^2{u^\prime}v^\prime +\Gamma_{22}^2{v^\prime}^2\Big)\Phi_v=0,
\end{equation}where $\Gamma_{ij}^k,(i,j,k=1,2)$ are Christoffel symbols given by (\ref{q12}). 

\noindent Since $\Phi_u$ and $\Phi_v$ are two basis vectors, therefore the geodesic condition in (\ref{r4}) is equivalent to the following system of differential equations:
\begin{equation}\label{b2}
\left\{
\begin{array}{ll}
u^{\prime\prime}+ \Gamma_{11}^1{u^\prime}^2+2\Gamma_{12}^1{u^\prime}v^\prime +\Gamma_{22}^1{v^\prime}^2=0\\
v^{\prime\prime}+ \Gamma_{11}^2{u^\prime}^2+2\Gamma_{12}^2{u^\prime}v^\prime +\Gamma_{22}^2{v^\prime}^2=0.
\end{array}
\right.
\end{equation}
\noindent Let $\tilde{\sigma}$ be a conformal image of $\sigma$ on $\tilde{\mathcal{M}}$. Using (\ref{q13}) and (\ref{q16}), we have
\begin{equation}
\left\{
\begin{array}{ll}
u^{\prime\prime}+ \Gamma_{11}^1{u^\prime}^2+2\Gamma_{12}^1{u^\prime}v^\prime +\Gamma_{22}^1{v^\prime}^2+\Big( \vartheta_{11}^1{u^\prime}^2+2\vartheta_{12}^1{u^\prime}v^\prime +\vartheta_{22}^1{v^\prime}^2\Big)=0\\
v^{\prime\prime}+ \Gamma_{11}^2{u^\prime}^2+2\Gamma_{12}^2{u^\prime}v^\prime +\Gamma_{22}^2{v^\prime}^2+\Big( \vartheta_{11}^2{u^\prime}^2+2\vartheta_{12}^2{u^\prime}v^\prime +\vartheta_{22}^2{v^\prime}^2\Big)=0,
\end{array}
\right.
\end{equation}
where $\epsilon_{ij}^k,(i,j,k=1,2)$ are given by (\ref{q16}).
Therefore the fact that $\tilde{\sigma}$ is a geodesic on $\tilde{\mathcal{M}}$ is equivalent to the following system of differential equations:
\begin{equation*}
\left\{
\begin{array}{ll}
u^{\prime\prime}+ \Gamma_{11}^1{u^\prime}^2+2\Gamma_{12}^1{u^\prime}v^\prime +\Gamma_{22}^1{v^\prime}^2+f_1(E,F,G,\delta)=0\\
v^{\prime\prime}+ \Gamma_{11}^2{u^\prime}^2+2\Gamma_{12}^2{u^\prime}v^\prime +\Gamma_{22}^2{v^\prime}^2+f_2(E,F,G,\delta)=0,
\end{array}
\right.
\end{equation*}
$f_1(E,F,G,\delta)=\vartheta_{11}^1{u^\prime}^2+2\vartheta_{12}^1{u^\prime}v^\prime +\vartheta_{22}^1{v^\prime}^2$ and $f_2(E,F,G,\delta)=\vartheta_{11}^2{u^\prime}^2+2\vartheta_{12}^2{u^\prime}v^\prime +\vartheta_{22}^2{v^\prime}^2$. This proves the result.
\end{proof}
\begin{corollary}\label{cor4.4}  A geodesic say $\sigma(s)$ on a smooth surface is invariant under isometry and homothetic motion.
\end{corollary}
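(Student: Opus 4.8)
The plan is to obtain this as an immediate specialization of Theorem \ref{thm4.3}. By that theorem, the image $\tilde\sigma=\mathcal G\circ\sigma$ is a geodesic on $\tilde{\mathcal M}$ exactly when the system (\ref{conf1}) holds, the correction terms $f_1,f_2$ being the quadratic forms in $u^\prime,v^\prime$ whose coefficients are, respectively, the $\vartheta_{ij}^1$ and the $\vartheta_{ij}^2$ of (\ref{q16}). The crucial structural feature I would isolate is that every $\vartheta_{ij}^k$ displayed in (\ref{q16}) is a sum of terms each of which carries a factor $\delta_u$ or $\delta_v$.

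First I would dispose of the isometric case. Here $\delta\equiv 1$, so $\delta_u=\delta_v=0$, whence every $\vartheta_{ij}^k$ vanishes and therefore $f_1\equiv f_2\equiv 0$. By (\ref{q13}) this also yields $\tilde\Gamma_{ij}^k=\Gamma_{ij}^k$, in agreement with the fact that the Christoffel symbols (\ref{q12}) are built solely from $E,F,G$ and their first derivatives, all preserved by an isometry. Hence the system (\ref{conf1}) reduces to (\ref{b2}), which is precisely the geodesic equation for $\sigma$ on $\mathcal M$; so $\sigma$ is a geodesic on $\mathcal M$ if and only if $\tilde\sigma$ is a geodesic on $\tilde{\mathcal M}$. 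The homothetic case runs the same way: with $\delta\equiv c$ a non-zero constant one again has $\delta_u=\delta_v=0$, so all $\vartheta_{ij}^k$, and hence $f_1,f_2$, vanish, and one checks directly from (\ref{q12}) that $\tilde\Gamma_{ij}^k=\Gamma_{ij}^k$ because the numerators and the factor $\tilde W^2=c^4W^2$ both pick up the same power of $c$. Thus (\ref{conf1}) collapses to (\ref{b2}) once more.

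The one point deserving a remark is the parametrization under a homothety: since the arc-length element is rescaled by the constant $c$, the curve transported by the same parameter $s$ has constant, generally non-unit, speed on $\tilde{\mathcal M}$. But (\ref{b2}) asserts exactly that $\sigma^\prime$ is parallel along $\sigma$, and this is unaffected by an affine reparametrization, so a constant-speed solution of (\ref{b2}) is still a geodesic and, after the obvious rescaling of $s$, a unit-speed one. I do not anticipate a genuine obstacle: the whole content of the corollary is the vanishing of the $\vartheta_{ij}^k$ when $\delta$ is locally constant, and this last remark is needed only to make the homothetic conclusion literal rather than merely up to reparametrization.
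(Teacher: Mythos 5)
Your proposal is correct and follows essentially the same route as the paper: both arguments specialize Theorem \ref{thm4.3} and observe that every $\vartheta_{ij}^k$ in (\ref{q16}) carries a factor $\delta_u$ or $\delta_v$, so for $\delta\equiv 1$ or $\delta\equiv c$ the terms $f_1,f_2$ vanish and (\ref{conf1}) collapses to the geodesic system (\ref{b2}). Your added remark on the affine reparametrization of arc-length under a homothety is a small refinement the paper leaves implicit, but it does not alter the approach.
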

\begin{proof}Noting that from (\ref{q13}) and (\ref{q16}), the Christofell symbols are invariant under isometry of homothetic motion. From (\ref{conf1}), we see that the geodesic conditions are dilated with the dilation factors $f_1$ and $f_2$, but from (\ref{conf1}), (\ref{q13}) and (\ref{q16}), it is easy to judge that the geodesics remain invariant under isometry and homothetic motion.
\end{proof}
{\bf Note:} The conclusions in theorem \ref{thm4.3} and corollary \ref{cor4.4} are true for a general space curve, the same is true, in particular if $\sigma(s)$ is an osculating curve.

\section{acknowledgment}
 The second author greatly acknowledges to The University Grants Commission, Government of India for the award of Junior Research Fellow.


\begin{thebibliography}{100}
%%%%%%%%%%%%%%%%%%%%%%%%%%%%%%\\
\bibitem{1}
A. Bobenko, C. Gunn, DVD-Video PAL, 15 minutes, \url{https://www.springer.com/us/book/9783319734736}, Springer VideoMATH, March 20, 2018, or \url{https://www.youtube.com/watch?v=7TFDMlLEOBw}, 19 Nov. 2018.
%%%%%%%%%%%%%%%%%%%%%%%%%%%%%
 \bibitem{2}
 B.-Y. Chen,
  \textit{What does the position vector of a space curve always lie in its rectifying plane}?, 
 Amer. Math. Monthly, \textbf{110} (2003), 147-152.
 %%%%%%%%%%%%%%%%%%%%%%%%%%%%%
 \bibitem{3}
  B.-Y. Chen and F. Dillen,
   \textit{Rectfying curve as centrode and extremal curve.}, 
  Bull. Inst. Math. Acad. Sinica, \textbf{33}, no. 2, (2005), 77-90.
  %%%%%%%%%%%%%%%%%%%%%%%%%%%%%
  \bibitem{4}
   S. Deshmukh, B.-Y. Chen and S. H. Alshammari,
    \textit{On a rectifying curves in euclidean 3-space}, 
   Turk. J. Math., \textbf{42} (2018), 609-620.  
   
 \bibitem{5}
M. P. do Carmo,
\textit{Differential geometry of curves and surfaces}, 
Prentice-Hall, Inc, New Jersey, 1976.
  
 \bibitem{6}
   M. He, D. B. Goldgof and C. Kambhamettu,
    \textit{Variation of Gaussian curvature under conformal mapping and its application}, 
   Comuputers Math. Applic., \textbf{26} (1993), 63-74. 

%\bibitem{7}
%A. Pressley,
%\textit{Elementary differential geometry}, 
%Springer-Verlag, 2001. 
 
\bibitem{8}
   A. A Shaikh and P. R. Ghosh
    \textit{Rectifying and osculating curves on a smooth surface}, 
  to appear in Indian J. Pure Appl. Math., (2018).
   %%%%%%%%%%%%%%%%%%%%%%%%%%%%%
   \bibitem{10}
   A. A Shaikh and P. R. Ghosh
    \textit{Rectifying curves on a smooth surface immersed in the Euclidean space}, 
  to appear in Indian J. Pure Appl. Math., (2018).
   \bibitem{9}
     A. A Shaikh, M. S. Lone and P. R. Ghosh,
       \textit{Normal curves on a smooth immersed surface}, \url{http://arxiv.org/abs/1906.04738v1}, submitted, (2019).
       
       \bibitem{12}
            A. A Shaikh and P. R. Ghosh,
              \textit{Some characterizations of Rectifying and osculating curves on a smooth immersed surface},  arXiv:1906.10520, submitted, (2019).
\bibitem{11}
     A. A Shaikh, M. S. Lone and P. R. Ghosh,
       \textit{Rectifying curves under conformal transformation}, 
     submitted, (2019).
\end{thebibliography}
\end{document}